\newcolumntype{P}[1]{>{\centering\arraybackslash}p{#1}}
\theoremstyle{plain}
\newtheorem{theorem}{Theorem}[section]
\newtheorem{proposition}{Proposition}[section]
\newcounter{casenum}
\theoremstyle{definition}
\newtheorem{definition}{Definition}[section]
\theoremstyle{remark}
\newtheorem{remark}{Remark}
\def\tsc#1{\csdef{#1}{\textsc{\lowercase{#1}}\xspace}}
\journal{Journal of \LaTeX\ Templates}
\begin{document}
	\nolinenumbers
	\begin{frontmatter}
		
		\title{Geometric Infinitely Divisible Autoregressive Models}
		
		\author[mymainaddress]{Monika S. Dhull}
		
		\author[mymainaddress]{Arun Kumar \corref{mycorrespondingauthor}}
		\cortext[mycorrespondingauthor]{Corresponding author}
		\ead{arun.kumar@iitrpr.ac.in}
		
		
		\address[mymainaddress]{Department of Mathematics, Indian Institute of Technology Ropar, Punjab-140001, India.}
		
		
		
		
		
		\begin{abstract}
			In this article, we discuss some geometric infinitely divisible (gid) random variables using the Laplace exponents which are Bernstein functions and study their properties. The distributional properties and limiting behavior of the probability densities of these gid random variables at $0^+$ are studied. The autoregressive (AR) models with gid marginals are introduced. Further, the first order AR process is generalised to $k^{th}$ order AR process. We also provide the parameter estimation method based on conditional least square and method of moments for the introduced AR($1$) processes.
		\end{abstract}
		
		\begin{keyword}
			Bernstein function \sep autoregressive model \sep geometric infinite divisibility \sep non-Gaussian time series modeling
			\MSC[2020] 60E07\sep 60G10
		\end{keyword}
		
	\end{frontmatter}

	\section{Introduction}\label{intro}
	The classical autoregressive (AR) processes are the most popular time series models, since these models are easy to understand, interpretable, and have applications in different fields. In AR processes the next term is a linear combination of previous terms and an innovation term. The distribution of innovation terms plays an important role to capture the extreme events. The classical AR model assumes that the innovation terms has Gaussian distribution, which further leads to the marginals to be Gaussian. However, many real life time series marginals exhibit heavy-tailed behaviour or semi-heavy tailed behaviour. In literature various AR models are introduced with non-Gaussian innovations and marginals. The data with non-negative observations, binary outcomes, series of counts, proportions are some examples of non-Gaussian real life time series data (see e.g. \cite{GAG1993}). The first order AR models with different marginal distributions are very well studied in literature.  \cite{Gav1980} considered the AR(1) model with marginals from gamma distribution. The authors discussed that the innovation terms have distribution same as the distribution of a non-negative random variable which is exponential for positive values and has a point mass at $0$. A second order autoregressive model is considered by \cite{Dew1985}, where Laplace distributed marginals are assumed. \cite{Abba1999} considered inverse Gaussian autoregressive model where the marginals are inverse Gaussian distributed. The authors shows that after fixing a parameter of the inverse Gaussian to $0$ the innovation term is also inverse Gaussian distributed. Recently, \cite{niha2022} studied the AR($1$) model with one-sided tempered stable marginals and innovations and demonstrated that the model fits very well on real world data. The first order AR processes are also studied by \cite{Jose2004, Jose2006} with geometric $\alpha$-Laplace and geometric Pakes generalised Linnik marginals. For a literature survey on unified view of AR(1) models see \cite{Grunwald1996}. \\
	In this article, we use the concept of geometric infinitely divisible distributions to define a new class of generalised AR($1$) processes.  We first define a class of gid random variables with Laplace transform of the form $\frac{1}{1+g(s)}$, where $g(s)$ is a Bernstein function which is the Laplace exponent of general subordinators. A large class of distributions e.g. the geometric $\alpha$-stable (or Mittag Leffler), geometric tempered stable, geometric gamma, geometric inverse Gaussian, one-sided geometric Laplace distributions and others come under this umbrella. We mainly focus on geometric tempered stable, geometric inverse Gaussian and geometric gamma distributions and corresponding AR models in this article. As per our knowledge these models and corresponding AR processes are not explored in literature. First the properties of gid marginals are studied in section \ref{section2}. Section \ref{GBAR_model}
	describes the AR$(1)$ model with gid marginals. We also obtain the distribution of innovation terms which are from the class of gid distributions except scale change. We then evaluate the pdf of innovation terms in integral form for the geometric tempered stable, geometric gamma, and geometric inverse Gaussian. Further, we generalise the result obtained in this section to the $k^{th}$ order autoregressive processes. In section \ref{section4} we use the conditional least square (CLS) and method of moments (MOM) to estimate the parameters of the AR($1$) model defined in \ref{ARmodel} and provide the simulation study for the same.
	Section \ref{conclusion} concludes the work.   
	
	
	\section{Geometric infinite divisible distributions}\label{section2}
	In this section, we introduce gid distributions. Infinitely divisible random variables (or distributions) with positive support play a crucial role in the study of subordinators \cite{applebaum}. The subordinators are real-valued non-decreasing stochastic processes having independent and stationary increments. The properties of subordinators are well studied in literature (for e.g. see, \cite{applebaum, arun2018, kumar_vellai}). Bernstein functions play a crucial role in the theory of non-decreasing L\'evy processes or subordinators. A function  $\phi: (0,\infty) \rightarrow (0, \infty)$ is called a Bernstein function if it admits the following representation
	$$
	\phi(s) = a + bs + \int_{0}^{\infty}(1-e^{-sx})\nu(dx), 
	$$
	where $a\geq 0$, $b\geq 0$ and $\nu$ is called the L\'evy measure which is defined on $(0, \infty)$. Alternatively, the function $\phi: (0,\infty) \rightarrow (0, \infty)$ is a Bernstein function if $\phi$ is of class $\mathbb{C^{\infty}}$ and $\phi(s) \geq 0$, for all $s\geq 0$ and $(-1)^{n-1}\phi^{(n)}(s) \geq 0,$ for all $n\in \mathbb{N}$ and $s>0$ (\cite{book1}). For a subordinator $S(t)$, it follows that $\mathbb{E}[e^{-sS(t)}] = e^{-t\phi(s)}$, where $\phi(s)$ is also called the Laplace exponent.  Next we define the gid' marginals.\\
	\noindent Consider a positive random variable $Y$ with Laplace transform $f(s)$ that is $\mathbb{E}[e^{-sY}] = f(s).$ Let $X$ be a positive infinitely divisible random variable with Laplace transform $\mathbb{E}[e^{-sX}] = e^{-g(s)}$, where $g(s)$ is a Bernstein function (\cite{book1}). Now we can write the Laplace transform of $X$ as 
	\begin{equation}\label{gid}
		\phi_X(s) = e^{-g(s)} =  e^{1-g(s)-1} = e^{1-\frac{1}{(1+g(s))^{-1}}} = e^{1- \frac{1}{f(s)}},\; s>0.
	\end{equation}
	\noindent It is known from \cite{Klebanov1985} that a distribution with Laplace transform $f(s)$ is geometrically infinitely divisible if and only if the distribution with Laplace transform $e^{1-\frac{1}{f(s)}}$ is infinitely divisible. One can define a class of geometrically infinitely divisible random variables by choosing $g(s)$ as the Laplace exponent of general subordinators and we call these random variables as the gid based random variables. We introduce a class of random variables $Y$ on ($0,\infty$) with Laplace transform of the form $f(s) = \frac{1}{1+g(s)}$, where $g(s)$ is the Laplace exponent of positive infinitely divisible random variables and study the relevant properties for time series autoregressive models.
	We extend the concept of geometric $\alpha-$Laplace (\cite{Jose2004}), geometric Pakes generalised Linnik marginals (\cite{Jose2006}) based autoregressive models to general class of autoregressive models with gid marginals. 
	
	\begin{definition}\label{def2}
		The random variable $Y$ on ($0,\infty$) is said to have gid marginals if its Laplace transform is given by $f(s) = \frac{1}{1+g(s)}$, where $g(s)$ is a Bernstein function and the Laplace exponent of positive infinitely divisible random variable. 		
	\end{definition}
	
	\noindent In particular, we consider the different functions for $g(s)$ and define the Laplace transform of corresponding gid marginals:
	\begin{enumerate}[(a)]
		\item Geometric tempered stable:
		The Laplace transform of tempered stable random variable is $e^{-g(s)}$ with $g(s) = (s+\lambda)^{\beta}-\lambda^{\beta}$, then $f(s) = \frac{1}{1+(s+\lambda)^{\beta}-\lambda^{\beta}}, \, \lambda>0 \text{ and }\beta\in(0,1).$ 
		
		\item Geometric gamma: 
		The Laplace transform of gamma random variable is $e^{-g(s)}$ with $g(s) = \alpha\log(1+\frac{s}{\beta}),$ then $f(s) = \frac{1}{1+\alpha \log\left(\frac{s+\beta}{\beta}\right)}, \, \alpha>0 \text{ and } \beta>0.$
		
		\item Geometric inverse Gaussian:
		The Laplace transform of inverse Gaussian random variable is $e^{-g(s)}$ with $g(s) = \delta\gamma\Bigl\{\sqrt{1+\frac{2s}{\gamma^2}}-1\Bigr\}$, then $f(s) = \frac{1}{1+\delta\gamma\Bigl\{\sqrt{1+\frac{2s}{\gamma^2}}-1\Bigr\}}, \, \delta>0 \text{ and } \gamma>0.$
	\end{enumerate}
	
	\begin{proposition}
		Consider the identically and independently distributed random variables $X_1, X_2, \hdots,$ with gid marginals defined in Def. \ref{def2}. Also consider $N(\theta)$ be a geometric random variable with mean $\frac{1}{\theta}$ and $P[N(\theta)=r] = \theta(1-\theta)^{r-1}$, $r = 1,2,\cdots, \, 0<\theta<1.$ Then $Y = \sum_{i=1}^{N(\theta)} X_{i}$ also have gid marginals with a scale change.  
	\end{proposition}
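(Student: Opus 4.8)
The plan is to compute the Laplace transform of $Y$ directly by conditioning on $N(\theta)$, sum the resulting geometric series, and then recognise the outcome as the Laplace transform of a gid random variable in the sense of Definition~\ref{def2}. First, using that $N(\theta)$ is independent of the sequence $(X_i)$, that the $X_i$ are i.i.d.\ with $\mathbb{E}[e^{-sX_i}] = f(s) = \frac{1}{1+g(s)}$, and the multiplicativity of the Laplace transform over independent sums, I would write
\begin{equation*}
\mathbb{E}\bigl[e^{-sY}\bigr] = \sum_{r=1}^{\infty} \mathbb{E}\bigl[e^{-s(X_1+\cdots+X_r)}\bigr]\,\theta(1-\theta)^{r-1} = \theta f(s)\sum_{r=1}^{\infty}\bigl((1-\theta)f(s)\bigr)^{r-1}.
\end{equation*}
Since $0 < f(s) \le 1$ for $s>0$ and $0<1-\theta<1$, the common ratio $(1-\theta)f(s)$ lies in $(0,1)$, so the series converges and $\mathbb{E}[e^{-sY}] = \dfrac{\theta f(s)}{1-(1-\theta)f(s)}$.

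Next I would substitute $f(s)=\frac{1}{1+g(s)}$ and simplify:
\begin{equation*}
\frac{\theta f(s)}{1-(1-\theta)f(s)} = \frac{\theta}{\bigl(1+g(s)\bigr) - (1-\theta)} = \frac{\theta}{\theta+g(s)} = \frac{1}{1+\frac{1}{\theta}\,g(s)}.
\end{equation*}
It then remains to verify that $\tilde g(s) := \frac{1}{\theta}g(s)$ is again a Bernstein function which is the Laplace exponent of a positive infinitely divisible random variable, so that Definition~\ref{def2} applies to $Y$. This is immediate from the integral representation $g(s) = a+bs+\int_{0}^{\infty}(1-e^{-sx})\,\nu(dx)$ with $a,b\ge 0$: multiplying by the positive constant $1/\theta$ yields $\tilde g(s) = \frac{a}{\theta}+\frac{b}{\theta}s+\int_{0}^{\infty}(1-e^{-sx})\,\frac{1}{\theta}\nu(dx)$, and $\frac{a}{\theta}\ge 0$, $\frac{b}{\theta}\ge 0$, while $\frac{1}{\theta}\nu$ is again a L\'evy measure on $(0,\infty)$. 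Hence $Y$ has gid marginals with Bernstein function $\frac{1}{\theta}g$; for the canonical examples this rescaling of $g$ is literally a change of scale in the argument (e.g.\ for $g(s)=s^{\alpha}$ one has $\frac{1}{\theta}g(s)=g(\theta^{-1/\alpha}s)$), which accounts for the phrase ``with a scale change.''

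The computation is essentially routine, so I do not expect a serious obstacle. The two points deserving explicit comment are the convergence of the geometric series, which is controlled by $0<(1-\theta)f(s)<1$, and the verification that the rescaled Laplace exponent $\frac{1}{\theta}g$ still satisfies the hypotheses of Definition~\ref{def2} --- this last check is really the whole content of the statement, and it reduces to the trivial observation that a nonnegative multiple of a Bernstein function is a Bernstein function.
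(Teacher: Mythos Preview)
Your argument is correct and follows essentially the same route as the paper: condition on $N(\theta)$, sum the geometric series, and simplify to $\frac{1}{1+\theta^{-1}g(s)}$. You add the explicit checks that the series converges and that $\theta^{-1}g$ is again a Bernstein function, which the paper leaves implicit, but the underlying proof is the same.
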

	
	\begin{proof} 
		Let the Laplace transform of each $X_i$ for $i=1,2, \hdots$ is denoted by $\phi_X(s)$.	Then, the Laplace transform of random variable $Y$ can be written as,
		\begin{align*}
			\phi_{Y}(s) &= \sum_{r=1}^{\infty}[\phi_{X}(s)]^{r} \theta(1-\theta)^{r-1}\, =\sum_{r=1}^{\infty}\Big[\frac{1}{1+g(s)}\Big]^{r} \theta(1-\theta)^{r-1}\\
			&=\frac{\frac{\theta}{1+g(s)}}{1-\big(\frac{1-\theta}{1+g(s)}\big)}\, =\frac{\theta}{\theta + g(s)}\,=\frac{1}{1+\frac{g(s)}{\theta}}.
		\end{align*}
		Hence, $Y$ is also a gid random variable.
	\end{proof}
	\noindent Next, we study the limiting behavior of the densities of gid random variables near $0^+$ using Laplace transform. Since the pdf of gid don't have closed form i.e. they have integral representations, therefore we apply the Tauberian theorem defined in \cite{Feller1971} (pp. 446, Theorem 4). In the following theorem we apply the Tauberian theorem on the geometric tempered stable and geometric inverse Gaussian cases.
	
	\begin{theorem}\label{Theorem2.1}
		Consider the following Laplace transform $W(s)$ for their corresponding density function $f(x)$, then we obtain the asymptotic behavior of $f(x)$ at $0^{+}$.
		\begin{enumerate}[(a)]
			\item For geometric tempered stable, $W(s)=\frac{1}{1+\theta((s+\lambda)^\beta-\lambda^\beta)}$, then as $x\to 0^+$ density $f(x)\sim \frac{x^{\beta-1}}{\theta\Gamma(\beta)}$, where $\lambda>0$ and $0 < \beta < 1$.
			
			
			\item For geometric inverse Gaussian, $W(s)=\frac{1}{1+\theta\delta\{\sqrt{\gamma^2+2s}-\gamma\}}$, then as $x\to 0^{+}$ density $f(x)\sim \frac{x^{-1/2}}{\theta\delta\sqrt{2\pi}}$, where $\delta>0$, and $ \gamma>0$.
		\end{enumerate}
	\end{theorem}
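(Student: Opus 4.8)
The plan is to read off the behaviour of each Laplace transform $W(s)$ as $s\to\infty$ and then invert those asymptotics via the Karamata--Tauberian theorem quoted from Feller (Vol.~II, \S XIII.5, Theorem~4): if $f$ is a density that is ultimately monotone near the origin and its Laplace transform satisfies $W(s)\sim s^{-\rho}L(s)$ as $s\to\infty$ for some $\rho>0$ and some $L$ slowly varying at infinity, then $f(x)\sim x^{\rho-1}L(1/x)/\Gamma(\rho)$ as $x\to0^{+}$. In both cases $L$ will turn out to be a constant, so only the power $s^{-\rho}$ has to be extracted.

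For part (a) I would write $(s+\lambda)^{\beta}=s^{\beta}(1+\lambda/s)^{\beta}=s^{\beta}\bigl(1+O(1/s)\bigr)$, so that $1+\theta\bigl((s+\lambda)^{\beta}-\lambda^{\beta}\bigr)=\theta s^{\beta}\bigl(1+o(1)\bigr)$, the bounded terms $1-\theta\lambda^{\beta}$ being negligible against $\theta s^{\beta}$ since $\beta>0$. Hence $W(s)\sim\theta^{-1}s^{-\beta}$, i.e. $\rho=\beta$ and $L\equiv\theta^{-1}$, and the Tauberian theorem gives $f(x)\sim x^{\beta-1}/(\theta\Gamma(\beta))$. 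For part (b) I would use $\sqrt{\gamma^{2}+2s}=\sqrt{2s}\,\sqrt{1+\gamma^{2}/(2s)}=\sqrt{2}\,s^{1/2}\bigl(1+o(1)\bigr)$, whence $1+\theta\delta\bigl(\sqrt{\gamma^{2}+2s}-\gamma\bigr)=\theta\delta\sqrt{2}\,s^{1/2}\bigl(1+o(1)\bigr)$ and $W(s)\sim(\theta\delta\sqrt{2})^{-1}s^{-1/2}$; here $\rho=\tfrac12$, $L\equiv(\theta\delta\sqrt{2})^{-1}$, and since $\Gamma(\tfrac12)=\sqrt{\pi}$ the theorem yields $f(x)\sim x^{-1/2}/(\theta\delta\sqrt{2}\,\sqrt{\pi})=x^{-1/2}/(\theta\delta\sqrt{2\pi})$.

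The one genuine point to verify — and the step I expect to need the most care — is the hypothesis that $f$ is (ultimately) monotone near $0^{+}$, which is precisely what allows the Tauberian theorem to be applied to the density rather than only to the cumulative distribution function. I would justify this by noting that, since $g$ is a Bernstein function, $1+g$ is again Bernstein and so $W=1/(1+g)$ is completely monotone with $W(0^{+})=1$; writing $W(s)=\int_{0}^{\infty}e^{-u}e^{-ug(s)}\,du$ exhibits $Y$ as the $\mathrm{Exp}(1)$-mixture $\int_{0}^{\infty}e^{-u}\mu_{u}(\cdot)\,du$ of the subordinator laws $\mu_{u}$, whose transition densities $p_{u}$ are decreasing near $0$ in the tempered-stable and inverse-Gaussian cases, so monotonicity of $f(x)=\int_{0}^{\infty}e^{-u}p_{u}(x)\,du$ for small $x$ follows. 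Alternatively, one may invoke the theorem in its distribution-function form to obtain $F(x)\sim c\,x^{\rho}/\Gamma(\rho+1)$ with $c=L(\infty)$ and then differentiate, using $\Gamma(\rho+1)=\rho\Gamma(\rho)$ to recover the stated constants; everything else in the argument is routine asymptotic bookkeeping.
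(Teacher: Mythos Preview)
Your proposal is correct and follows essentially the same route as the paper: extract the leading power $s^{-\rho}$ of $W(s)$ as $s\to\infty$ and apply Feller's Tauberian theorem with constant slowly varying function, obtaining the same $\rho$, $L$, and resulting asymptotics in both cases. In fact you are more careful than the paper, which simply invokes the Tauberian theorem without addressing the ultimate-monotonicity hypothesis that you rightly flag and justify.
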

	\begin{proof}
		\begin{enumerate}[(a)]
			\item The Laplace transform of geometric tempered stable is $W(s)=\frac{1}{1+\theta((s+\lambda)^\beta-\lambda^\beta)}$, where $\lambda>0,\,0 < \beta < 1.$ For large $s$, $W(s)\sim \frac{1}{\theta s^{\beta}}.$ We apply Tauberian theorem with $\rho = \beta$ and the slowly varying function $L(1/s)=\frac{1}{\theta}$. Thus, the density 
			$$f(x)\sim \frac{x^{\beta-1}}{\theta\Gamma(\beta)}, \text{ as } x\to 0^{+}.$$
			
			
			\item The Laplace transform of geometric inverse Gaussian is $W(s)=\frac{1}{1+\theta\delta\{\sqrt{\gamma^2+2s}-\gamma\}}$, where $\delta>0, \gamma>0$. Again for large $s$, $W(s)\sim \frac{1}{\theta \delta \sqrt{2s}}.$ The slowly varying function $L(1/s)=\frac{1}{\theta\delta \sqrt{2}}$ and $\rho=1/2.$ By Tauberian theorem, $f(x)\sim \frac{x^{-1/2}}{\theta\delta\sqrt{2\pi}},\text{ as } x\to 0^{+}$.
		\end{enumerate}   
	\end{proof}
	\noindent The density plots of the gid distributions specially, geometric tempered stable and geometric inverse Gaussian are also in accordance with the results obtained in above theorem. We use the method of Laplace transform applied in \cite{Ridout2009} to generate the data of length $1000$ in $R$ programming language. The density plots of gid with different parameter values as shown in Fig. \ref{fig2}.  
	\begin{figure}[H]
		\centering
		\subfigure[Geometric tempered stable]{\includegraphics[width=0.32\textwidth]{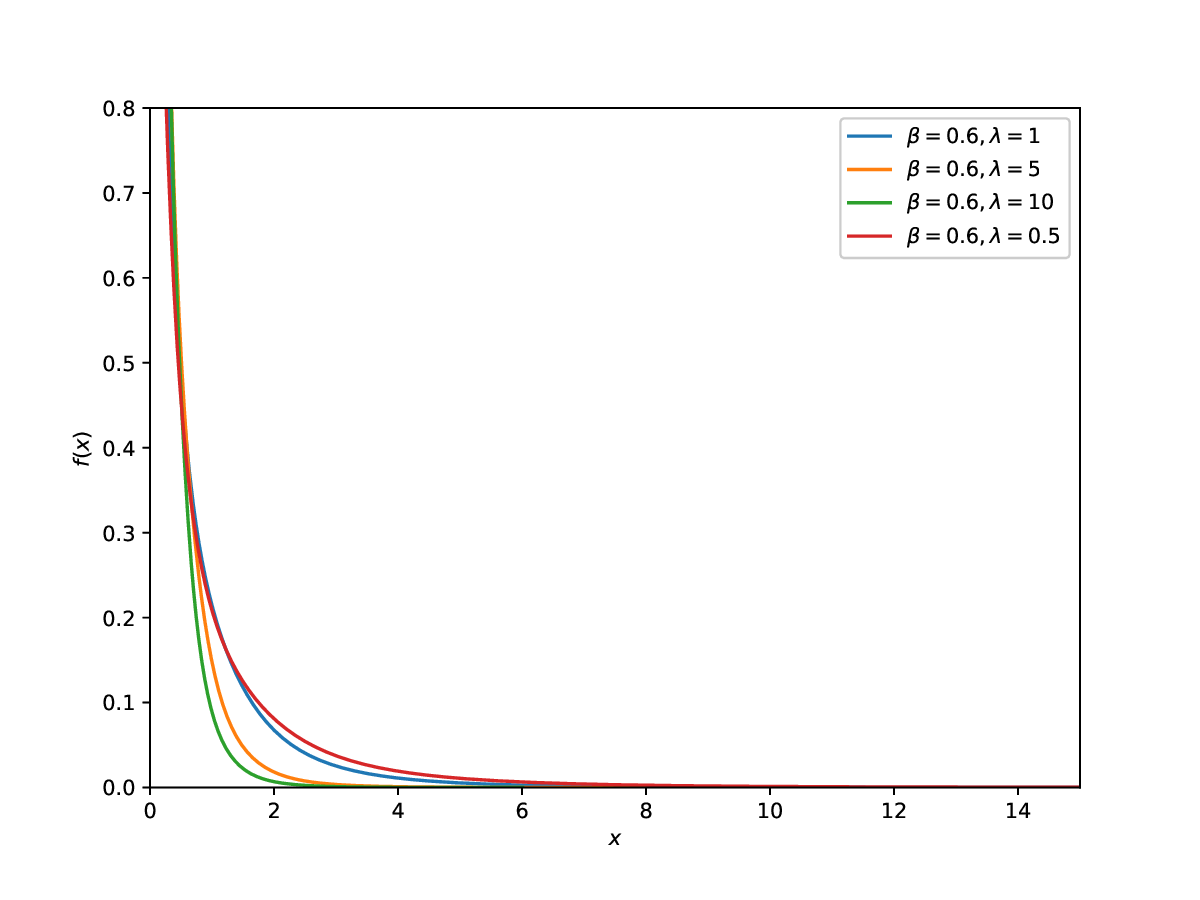}}\hfill
		\subfigure[Geometric gamma]{\includegraphics[width=0.32\textwidth]{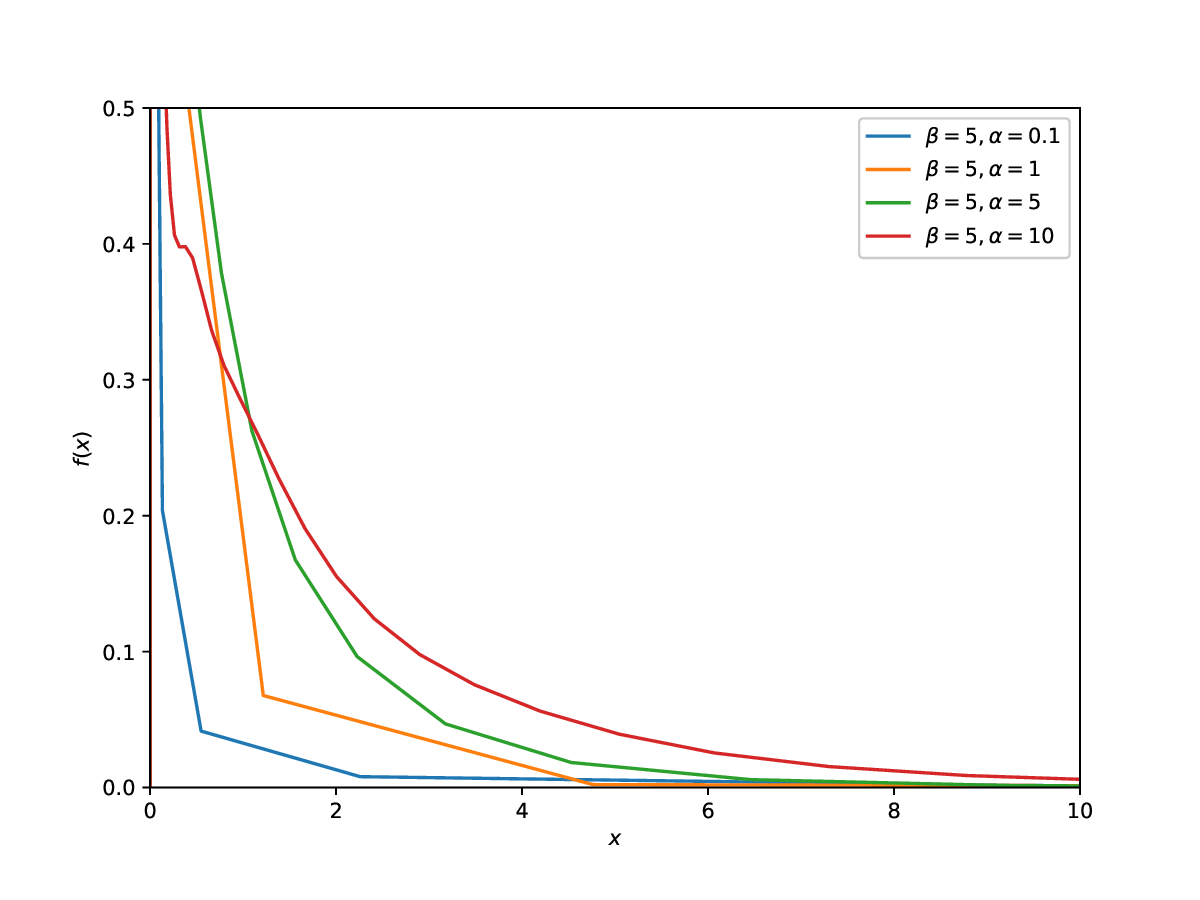}} \hfill
		\subfigure[Geometric inverse Gaussian]{\includegraphics[width=0.32\textwidth]{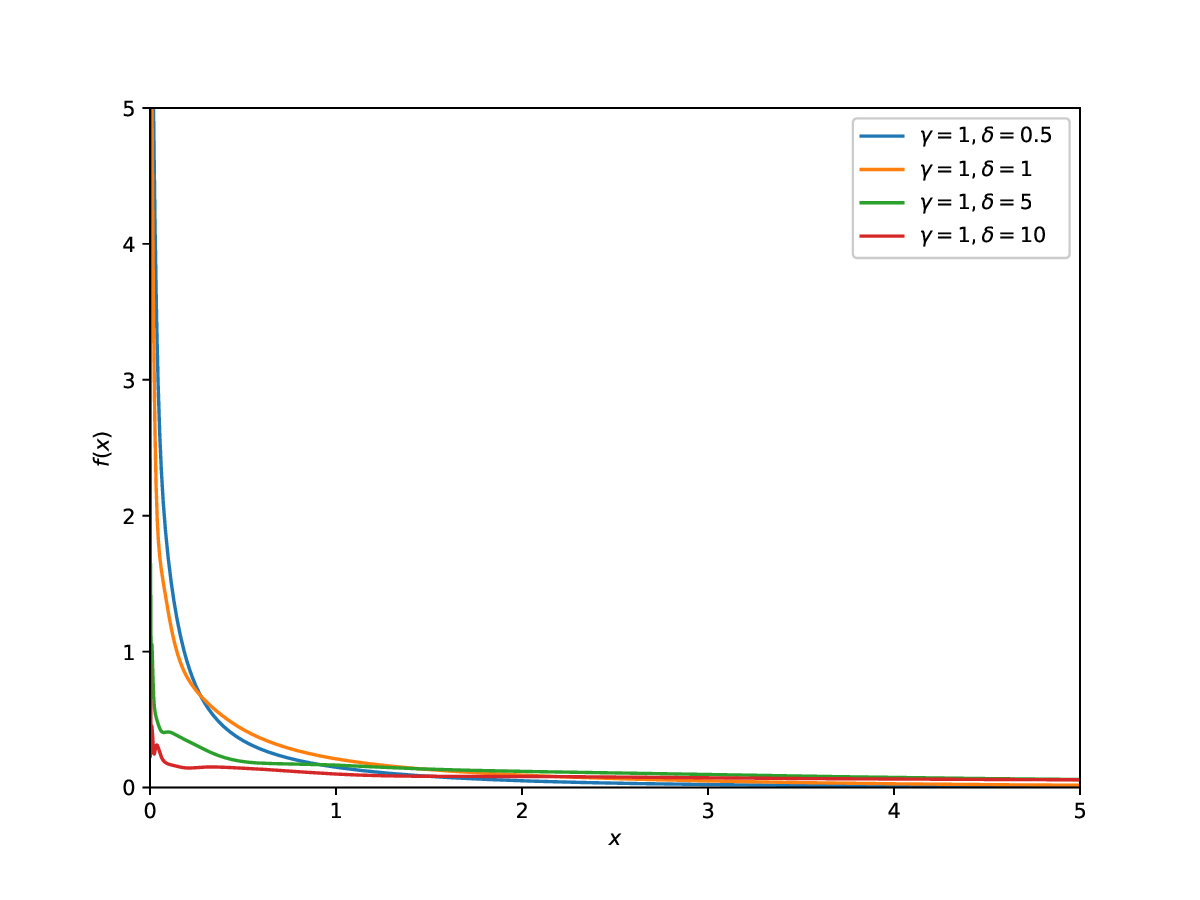}}
		\caption{The density plot of (a) geometric tempered stable with parameters $\beta=0.6$ and different $\lambda=1,5,10,0.5$ (b) geometric gamma with parameters $\beta=5$ and different $\alpha=0.1,1,5,10$ (c) geometric inverse Gaussian with parameters $\gamma=1$ and different $\delta=0.5,1,5,10$. }
		\label{fig2}
	\end{figure}
	
	\subsection{Mixture of gid random variables}
	We introduce a new random variable $M$ which is a mixture of two independent gid random variables say $Y_1$ and $Y_2$. We define $g(s) = cg_1(s) + (1-c)g_2(s)$, where $0<c<1$, $g_1$ and $g_2$ are Laplace exponent functions for $Y_1$ and $Y_2$ respectively.
	We compute $\phi_M(s)$ the Laplace for each case as follows:
	\begin{align*}
		e^{-g(s)} &= e^{-(c g_1(s)+(1-c)g_2(s))}= e^{1-1-(c g_1(s)+(1-c)g_2(s))} \\
		&= e^{1-\frac{1}{(1+cg_1(s) +(1-c)g_2(s))^{-1}}}
	\end{align*}
	The Laplace of gid mixture $M$ is,
	$$\phi_M(s) = \frac{1}{1+cg_1(s) +(1-c)g_2(s)}\\
	=\frac{1}{1 + g_2(s) + c(g_1(s)-g_2(s))},$$ for $0<c<1$.
	We substitute the values of functions $g_1(s)$ and $g_2(s)$ to obtain the Laplace transform of the following:
	\begin{enumerate}[(a)]
		\item Geometric mixture tempered stable: For $0<c<1,\, 0<\beta_1, \beta_2<1$ and $ \lambda_1, \lambda_2>0,$
		$$\phi_M(s) = \frac{1}{1 + (s+\lambda_2)^{\beta_2}-\lambda_2^{\beta_2} + c((s+\lambda_1)^{\beta_1}-\lambda_1^{\beta_1}-(s+\lambda_2)^{\beta_2}+\lambda_2^{\beta_2})}.$$
		
		\item Geometric mixture gamma: For $0<c<1,\, \beta_1, \beta_2>0$ and $ \alpha_1, \alpha_2>0,$
		$$\phi_M(s) = \frac{1}{1 + \alpha_2\log(\frac{s+\beta_2}{\beta_2})+ c\{\alpha_1\log(\frac{s+\beta_1}{\beta_1})-\alpha_2\log(\frac{s+\beta_2}{\beta_2})\}}.$$

		\item Geometric mixture inverse Gaussian: For $0<c<1,\, \delta_1, \delta_2>0$ and $ \gamma_1, \gamma_2>0,$
		$$\phi_M(s) = \frac{1}{1 + \delta_2\Bigl(\sqrt{2s+\gamma_2^2}-\gamma_2\Bigr)+ c\Bigl(\delta_1\Bigl(\sqrt{2s+\gamma_1^2}-\gamma_1\Bigr) -\delta_2\Bigl(\sqrt{2s+\gamma_2^2}-\gamma_2\Bigr)\Bigr)}.$$
	\end{enumerate}
	\begin{remark}
		One can study the asymptotic behavior of gid mixture as done in \ref{Theorem2.1}. Also, further these gid mixtures can be used to define autoregressive models as discussed in next section. 
	\end{remark}
	\section{Autoregressive model}\label{GBAR_model}
	
	\noindent In this section, we develop the AR$(1)$ process with  $\{Y_{n}\}$ as gid based marginals. Consider the following AR$(1)$ process:
	
	\begin{equation} \label{GBmodel}
		Y_{n} =
		\begin{cases}
			\epsilon_n, & \text{ with probability } \theta,\\
			Y_{n-1} + \epsilon_n, & \text{ with probability } 1-\theta,
		\end{cases} 
	\end{equation}
	where $0<\theta<1$.
	
	\begin{theorem}
		Consider a stationary AR$(1)$ process $\{Y_{n}\}$ as defined in Eq. \eqref{GBmodel}. Then $\{Y_{n}\}$ and $\{\epsilon_{n}\}$ are identically distributed except for a scale change if and only if $\{Y_{n}\}$ has gid marginals. 
	\end{theorem}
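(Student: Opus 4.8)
The plan is to pass to Laplace transforms, where the statement collapses to an elementary identity between the Laplace exponent of $Y_n$ and that of $\epsilon_n$. Writing $\phi_Y$ for the common Laplace transform of the stationary sequence $\{Y_n\}$ (so $\phi_{Y_n}=\phi_{Y_{n-1}}=\phi_Y$) and $\phi_\epsilon$ for that of $\{\epsilon_n\}$, and using that $\epsilon_n$ is independent of $Y_{n-1}$, I would condition on the Bernoulli switch in \eqref{GBmodel} to get
$$\phi_Y(s)=\theta\,\phi_\epsilon(s)+(1-\theta)\,\phi_\epsilon(s)\,\phi_Y(s),$$
and solve for $\phi_\epsilon$, obtaining $\phi_\epsilon(s)=\phi_Y(s)\big/\big(\theta+(1-\theta)\phi_Y(s)\big)$. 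This relation is a bijection between $\phi_Y$ and $\phi_\epsilon$ (it rearranges to $\phi_Y=\theta\phi_\epsilon/(1-(1-\theta)\phi_\epsilon)$), so no existence or consistency issue is hidden in the equivalence.

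Next I would introduce $g$ via $\phi_Y(s)=1/(1+g(s))$; this is legitimate for any proper nonnegative $Y_n$, since then $\phi_Y$ maps $(0,\infty)$ into $(0,1]$ with $\phi_Y(0^+)=1$, so $g:=1/\phi_Y-1$ is well defined, nonnegative, continuous, and vanishes at $0^+$. Plugging this into the formula for $\phi_\epsilon$ collapses the fraction to $\phi_\epsilon(s)=1/(1+\theta g(s))$, so the Laplace exponent attached to $\epsilon_n$ is $\theta g$; that is, $g_\epsilon:=1/\phi_\epsilon-1=\theta g$ \emph{always}. The whole theorem thus comes down to the equivalence: $g$ is a Bernstein function and the Laplace exponent of a positive infinitely divisible law if and only if $\theta g$ is — with the factor $\theta$ being precisely the ``scale change'', exactly as in the Proposition, where a geometric sum rescales the exponent by $1/\theta$.

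For ($\Leftarrow$), assume $\{Y_n\}$ has gid marginals, so $g$ is a Bernstein function that is the Laplace exponent of a subordinator $S$ with $\mathbb{E}e^{-sS(t)}=e^{-tg(s)}$. Since the class of Bernstein functions is closed under multiplication by the positive constant $\theta$, and $\theta g$ is the Laplace exponent of $S(\theta)$, the identity $\phi_\epsilon(s)=1/(1+\theta g(s))$ exhibits $\epsilon_n$ as a gid random variable with Laplace exponent $g$ up to the factor $\theta$; this is exactly the claim that $\{Y_n\}$ and $\{\epsilon_n\}$ are identically distributed except for a scale change. For ($\Rightarrow$), assuming $\{\epsilon_n\}$ equals $\{Y_n\}$ in distribution up to a scale change forces $\epsilon_n$ to have gid marginals, i.e. $g_\epsilon$ is a Bernstein function and an admissible Laplace exponent; since $g_\epsilon=\theta g$ with $\theta\in(0,1)$, dividing by $\theta$ (which likewise preserves both properties, via a deterministic time change of the associated subordinator) shows $g$ has them too, i.e. $\{Y_n\}$ has gid marginals.

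The one point deserving care is the bookkeeping around the phrase ``scale change'': one should fix, as in the Proposition, that a scale change means replacing the Laplace exponent $g$ by $\kappa g$ for some constant $\kappa>0$, and then verify that $g\mapsto\kappa g$ preserves membership in the class of Bernstein functions and the property of being the Laplace exponent of a positive infinitely divisible random variable. Granting that, the equivalence is immediate from $g_\epsilon=\theta g$; everything else is the routine algebra indicated above.
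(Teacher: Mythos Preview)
Your forward direction ($\Leftarrow$) is exactly the paper's: derive the stationarity relation $\phi_Y=\phi_\epsilon\bigl(\theta+(1-\theta)\phi_Y\bigr)$, substitute $\phi_Y=1/(1+g)$, and read off $\phi_\epsilon=1/(1+\theta g)$, so $\epsilon_n$ is gid with exponent $\theta g$.

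For the converse you diverge from the paper in how you read ``scale change''. The paper takes it in the ordinary sense $\phi_\epsilon(s)=\phi_Y(as)$, writes $\phi_\epsilon=1/(1+g)$, inverts the stationarity relation to get $\phi_Y=1/(1+g/\theta)$, and declares $Y$ gid. You instead fix ``scale change'' to mean $g\mapsto\kappa g$. Under that convention your argument becomes circular: you have already shown that $g_\epsilon=\theta g$ holds for \emph{every} stationary marginal (gid or not), so the hypothesis ``$\epsilon_n$ equals $Y_n$ up to a scale change'' in your sense is automatically satisfied and cannot by itself force $g$ to be Bernstein. Your clause ``assuming $\{\epsilon_n\}$ equals $\{Y_n\}$ in distribution up to a scale change forces $\epsilon_n$ to have gid marginals'' therefore assumes what is to be proved. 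To be fair, the paper's own converse is equally loose --- it never actually exploits the hypothesis $\phi_\epsilon(s)=\phi_Y(as)$, and the closing ``take $a=1/\theta$'' does not establish that $g$ is a Bernstein function either --- so your argument is no weaker than the original; but the gap you flag in your last paragraph is genuine and is not resolved by the convention you propose.
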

	
	\begin{proof} First assume that the marginal distribution of $\{Y_n\}$ is are gid. Then the Laplace transform of $\{Y_{n}\}$ is $\phi_{Y_n}(s)= \frac{1}{1+g(s)}$ and innovation terms $\{\epsilon_n\}$ is denoted by $\phi_{\epsilon_n}(s)$. We can write Eq. \eqref{GBmodel} in terms of Laplace transform as,
		$$\phi_{Y_n}(s) = \theta \phi_{\epsilon_n}(s) + (1-\theta)\phi_{Y_{n-1}}(s)\phi_{\epsilon_n}(s).$$
		Using the stationarity condition and the Laplace transform $\phi_{Y}(s) =\frac{1}{1+g(s)}$, we rewrite above equation as,
		\begin{align}
			\phi_{Y}(s) &= \theta \phi_{\epsilon}(s) + (1-\theta)\phi_{Y}(s)\phi_{\epsilon}(s)\,
			= \phi_{\epsilon}(s)\{\theta + (1-\theta)\phi_{Y}(s)\}\\
			\phi_{\epsilon}(s)&= \frac{\phi_{Y}(s)}{\theta + (1-\theta)\phi_{Y}(s)}\,
			= \frac{1}{1 + \theta g(s)}.\label{main_eq2}
		\end{align}
		
		\noindent Hence, we conclude that innovation terms \{$\epsilon_n$\} are gid with $\theta$ scale change.\\
		
		Conversely, assume that $\{Y_{n}\}$ and $\{\epsilon_{n}\}$ are identically distributed except for scale change say $\phi_{\epsilon}(s) = \phi_{Y}(as)$, where $a$ is a constant, $\phi_{Y}(s)$ and  $\phi_{\epsilon}(s)$ is Laplace transform of $\{Y_{n}\}$ and innovation terms $\{\epsilon_n\}$ respectively. Using $\phi_{\epsilon}(s) = \frac{1}{1+g(s)}$ and $\phi_{Y}(s)$, we get,
		\begin{align*}
			\phi_{Y_n}(s) &= \theta \phi_{\epsilon_n}(s) + (1-\theta)\phi_{Y_n}(s)\phi_{\epsilon_n}(s)\,
			= \phi_{\epsilon}(s)\{\theta + (1-\theta)\phi_{Y}(s)\}\\
			\phi_{Y}(s)	&= \frac{\theta \phi_{\epsilon}(s)}{1-\{1-\theta \phi_{\epsilon}(s)\}}\,= \frac{\theta}{1+g(s)-(1-\theta)}\\
			&=\frac{\theta}{\theta + g(s)}\,= \frac{1}{1+\frac{1}{\theta}g(s)}.		\end{align*}
		
		\noindent Take $a = \frac{1}{\theta}$, we get that $\{Y_n\}$ are also gid with scale change of $\frac{1}{\theta}$.\\
		Hence, we have the Laplace transform of $\{\epsilon_n\}$ as gid with scale change of $\theta$ and the marginals $\{{Y_n}\}$ will also be same as innovation terms, that is gid.
	\end{proof}
	
	\noindent We find the joint Laplace transform of $Y_n$ and $Y_{n-1}$ and check for time reversibility of the process in the following result.
	\begin{proposition}
		Consider the stationary AR$(1)$ process as defined in Eq. \eqref{GBmodel}. Then we can rewrite $Y_n$ as $Y_n = I_nY_{n-1} + \epsilon_n$, where $P[I_n=0] = \theta = 1-P[I_n=1], 0<\theta<1$. Then the $AR(1)$ process is not time reversible.
	\end{proposition}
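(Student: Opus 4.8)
The plan is to test time reversibility through the bivariate Laplace transform of a consecutive pair. Since $\{Y_n\}$ is a stationary Markov chain — given $Y_{n-1}$, the law of $Y_n=I_nY_{n-1}+\epsilon_n$ depends only on $Y_{n-1}$, with $I_n$ Bernoulli, $P[I_n=1]=1-\theta$, and $(I_n,\epsilon_n)$ independent of the past — time reversibility is equivalent to the exchangeability of $(Y_n,Y_{n-1})$, i.e.\ to the symmetry of the function $(s_1,s_2)\mapsto\phi_{Y_n,Y_{n-1}}(s_1,s_2):=\mathbb{E}\left[e^{-s_1Y_n-s_2Y_{n-1}}\right]$. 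So it suffices to compute this joint transform and exhibit its asymmetry.

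First I would compute the joint transform by conditioning on $I_n$ and using the independence of $I_n$, $\epsilon_n$ and $Y_{n-1}$, which gives
\[
\phi_{Y_n,Y_{n-1}}(s_1,s_2)=\phi_\epsilon(s_1)\Bigl[\theta\,\phi_Y(s_2)+(1-\theta)\,\phi_Y(s_1+s_2)\Bigr],
\]
where $\phi_Y(s)=\frac{1}{1+g(s)}$ is the marginal transform and, by Eq.~\eqref{main_eq2}, $\phi_\epsilon(s)=\frac{1}{1+\theta g(s)}$. Then I would equate this with the expression obtained by interchanging $s_1$ and $s_2$, rearrange, and substitute the two explicit transforms. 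After clearing denominators both sides carry the common factor $\theta(1-\theta)\bigl(g(s_1)-g(s_2)\bigr)$ together with a common denominator factor, which cancel — the first factor is nonzero whenever $s_1\neq s_2$ because a non-degenerate Bernstein function with $g(0^+)=0$ is strictly increasing — leaving the functional equation
\[
\bigl(1+g(s_1)\bigr)\bigl(1+g(s_2)\bigr)=1+g(s_1+s_2),\qquad s_1,s_2>0.
\]
Setting $h(s)=1+g(s)$, this reads $h(s_1+s_2)=h(s_1)h(s_2)$ with $h(0^+)=1$ (here $g(0^+)=0$ since $e^{-g}$ is a Laplace transform), which forces $h(s)=e^{cs}$, i.e.\ $g(s)=e^{cs}-1$ for some constant $c\geq 0$.

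To close the argument I would observe that $g(s)=e^{cs}-1$ is not admissible: for $c>0$ it is strictly convex, whereas every Bernstein function is concave, and for $c=0$ it reduces to $g\equiv 0$, the degenerate case excluded in Definition~\ref{def2}. Hence the symmetry cannot hold for any valid $g$, so the process is not time reversible. The only genuinely delicate step is the reduction at the outset — justifying that reversibility of the chain amounts to symmetry of the two-variable Laplace transform of one adjacent pair — together with careful bookkeeping of which factors may legitimately be cancelled in the simplification; everything after that is a short computation. If one prefers to bypass the functional-equation route, it is enough to insert a concrete Laplace exponent, say $g(s)=(s+\lambda)^\beta-\lambda^\beta$, and display two arguments $s_1\neq s_2$ for which $\phi_{Y_n,Y_{n-1}}(s_1,s_2)\neq\phi_{Y_n,Y_{n-1}}(s_2,s_1)$.
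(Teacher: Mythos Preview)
Your approach matches the paper's in its starting point: both compute the bivariate Laplace transform of $(Y_{n-1},Y_n)$ via the representation $Y_n=I_nY_{n-1}+\epsilon_n$ and obtain
\[
\phi_{Y_{n-1},Y_n}(s_1,s_2)=\frac{1}{1+\theta g(s_2)}\Bigl[\frac{\theta}{1+g(s_1)}+\frac{1-\theta}{1+g(s_1+s_2)}\Bigr],
\]
and both use the criterion that reversibility is equivalent to symmetry of this transform in $(s_1,s_2)$.

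The difference is in what happens next. The paper simply asserts that this expression ``implies that it is not time reversible'' and stops; no verification of the asymmetry is given. You, by contrast, actually \emph{prove} the asymmetry: equating the transform with its $(s_1,s_2)$-swap and clearing denominators yields the factor $\theta(1-\theta)\bigl(g(s_1)-g(s_2)\bigr)\bigl[(1+g(s_1+s_2))-(1+g(s_1))(1+g(s_2))\bigr]$, whence symmetry would force $h=1+g$ to be multiplicative, i.e.\ $g(s)=e^{cs}-1$, which is convex and therefore not a non-degenerate Bernstein function. Your algebra checks out (the cancellation you describe is exactly what one gets after expanding), and the exclusion of $g\equiv 0$ and the strict monotonicity of a non-degenerate Bernstein function are correctly invoked. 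So your argument is the same route carried to completion, and in that sense is more rigorous than the paper's version; the paper effectively leaves the key step to the reader.
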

	\begin{proof}
		The Laplace transform of joint variables $(Y_n, Y_{n-1})$ is calculated as follows:
		\begin{align*}
			\phi_{Y_{n-1}, Y_n}(s_1, s_2) &= \mathbb{E}[\exp(-s_1 Y_{n-1} - s_2Y_n)] \,= \mathbb{E}[\exp(-s_1 Y_{n-1} - s_2[I_nY_{n-1} + \epsilon_n]] \\
			&= \mathbb{E}[\exp(-(s_1 + s_2I_n) Y_{n-1} - s_2\epsilon_n)] \, = \mathbb{E}[\exp(-(s_1 + s_2I_n) Y_{n-1})] \phi_{\epsilon_n}(s_2)\\
			&=\frac{1}{1+\theta g(s_2)} \Big[\frac{\theta}{1+g(s_1)} + 
			\frac{1-\theta}{1+g(s_1 + s_2)}\Big].
		\end{align*}
		The joint Laplace transform of the process implies that it is not time reversible.
	\end{proof}
	\begin{figure}[H]
		\centering
		\subfigure[Geometric tempered stable]{\includegraphics[width=0.33\textwidth]{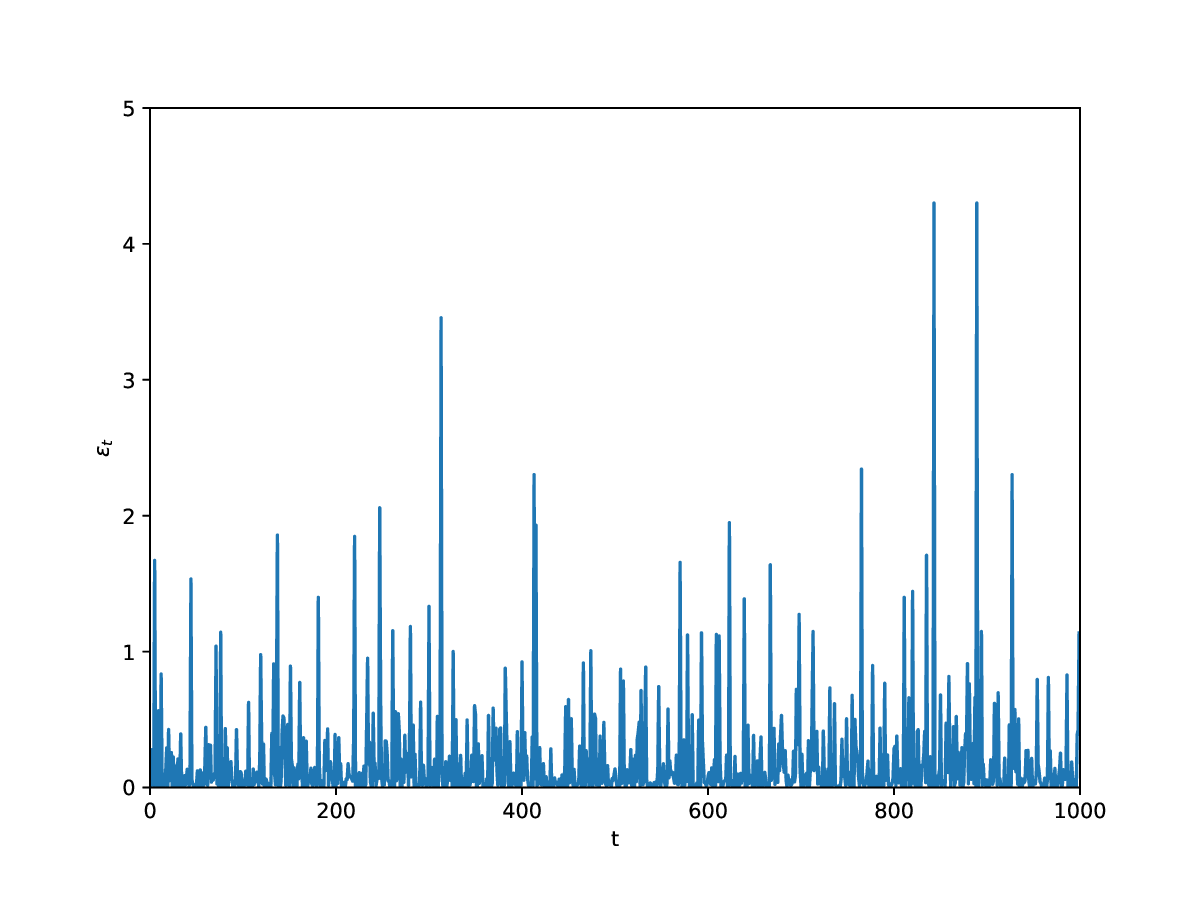}}\hfill
		\subfigure[Geometric gamma]{\includegraphics[width=0.33\textwidth]{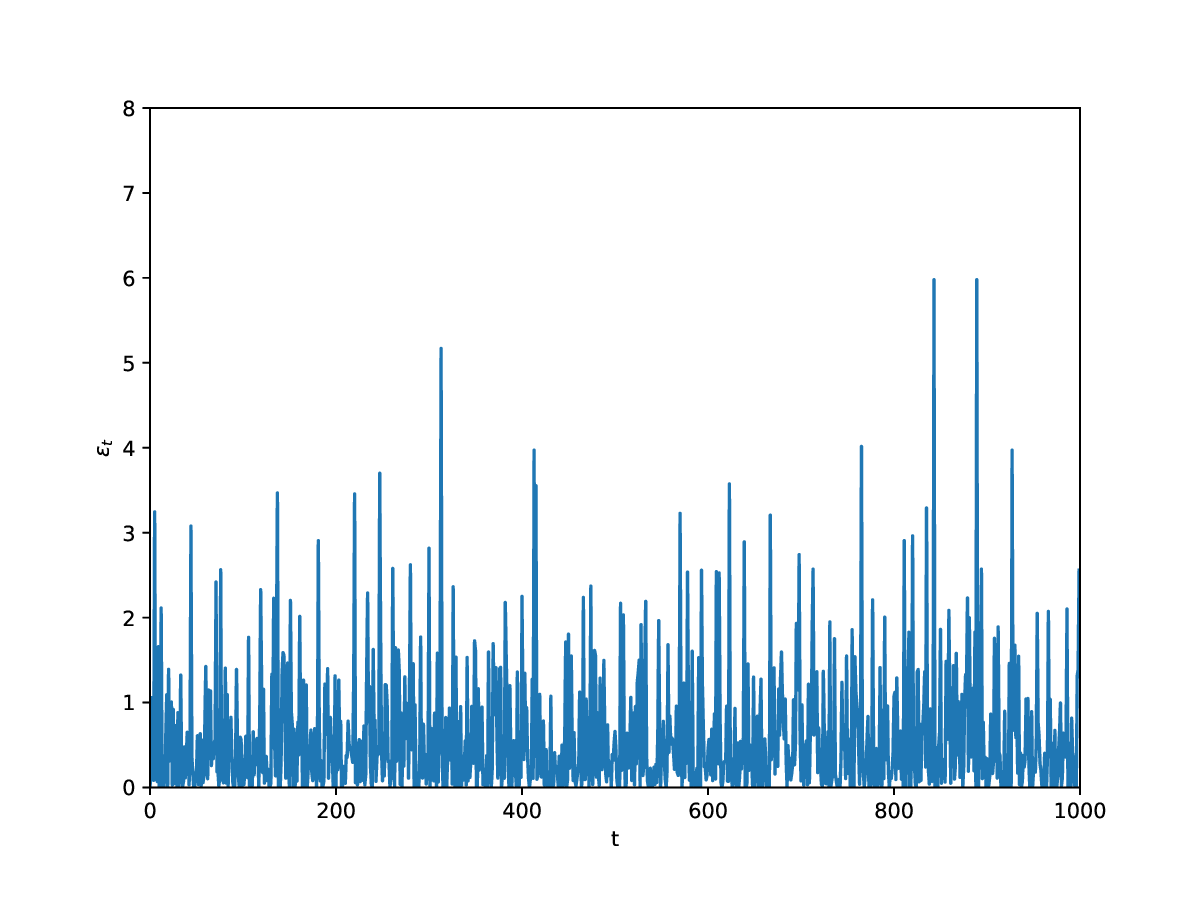}} \hfill
		\subfigure[Geometric inverse Gaussian]{\includegraphics[width=0.33\textwidth]{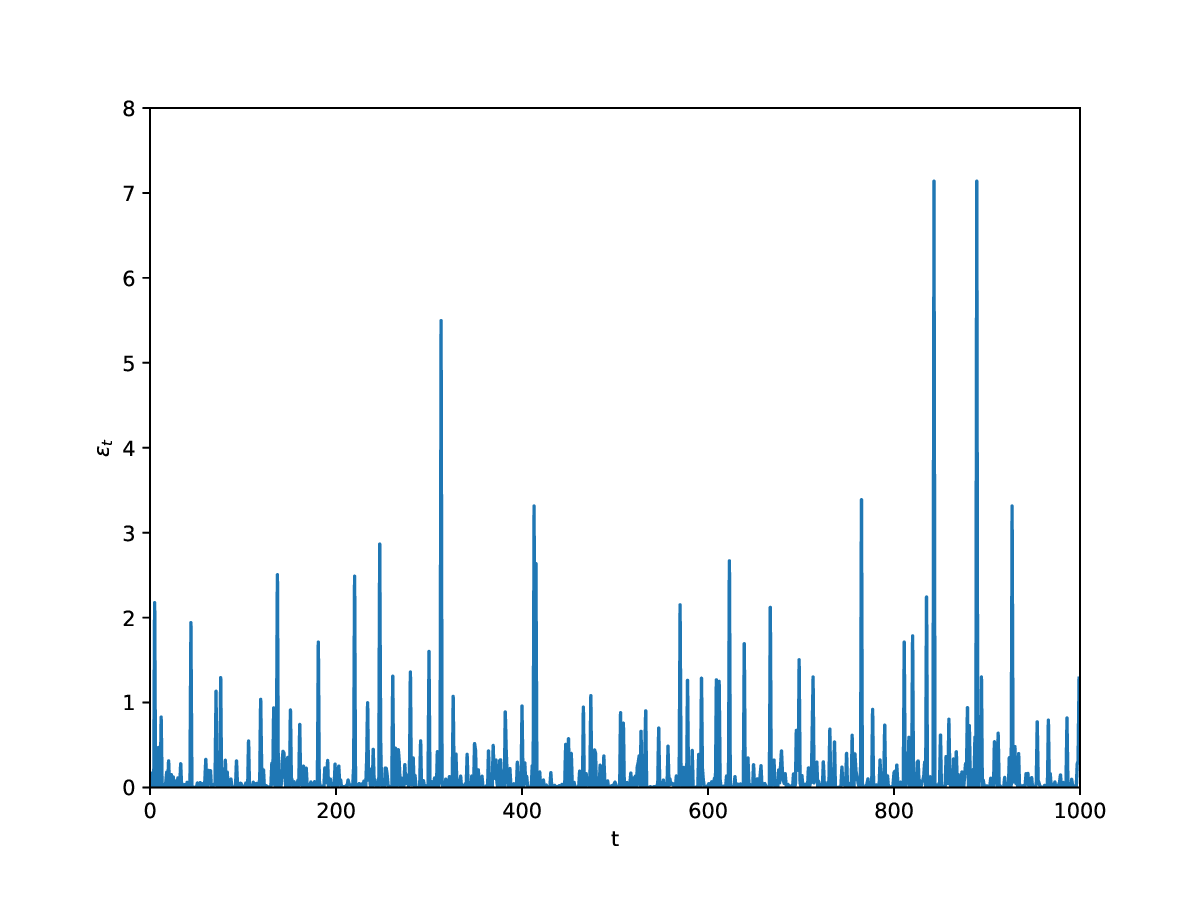}}
		\caption{The plot of innovation terms $\{\epsilon_t\}$ from AR($1$) model with $\theta =0.3$ for (a) geometric tempered stable with parameters $\beta=0.6$ and $\lambda=1$ (b) geometric gamma with parameters $\beta=5$ and $\alpha=10$ (c) geometric inverse Gaussian with parameters $\gamma=1$ and $\delta=0.5$. }
		\label{fig4}
	\end{figure}
	\noindent First we generate the i.i.d. innovation terms $\{\epsilon_t\}$ using the Laplace transform as discussed in \cite{Ridout2009}, then generate $\{Y_t\}$ from AR($1$) model defined in Eq. \eqref{GBmodel} with $\theta=0.3$. The plot of innovation terms $\{\epsilon_t\}$ for geometric tempered stable marginals, geometric gamma marginals and geometric inverse Gaussian are shown in Fig. \ref{fig4}. The time series $\{Y_t\}$ from AR($1$) model with $\theta=0.3$ for all the three cases are shown in Fig. \ref{fig3}.
	\begin{figure}[H]
		\centering
		\subfigure[Geometric tempered stable]{\includegraphics[width=0.33\textwidth]{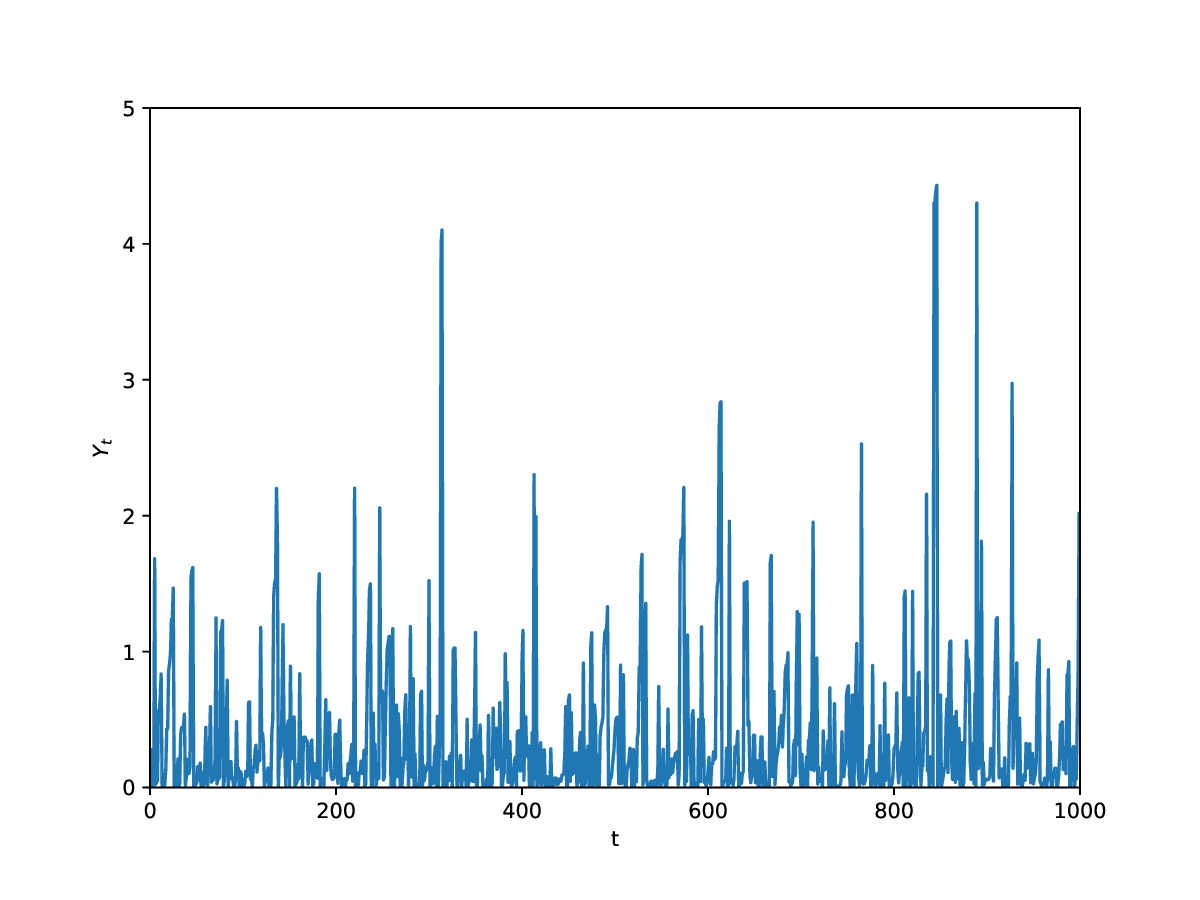}}\hfill
		\subfigure[Geometric gamma]{\includegraphics[width=0.33\textwidth]{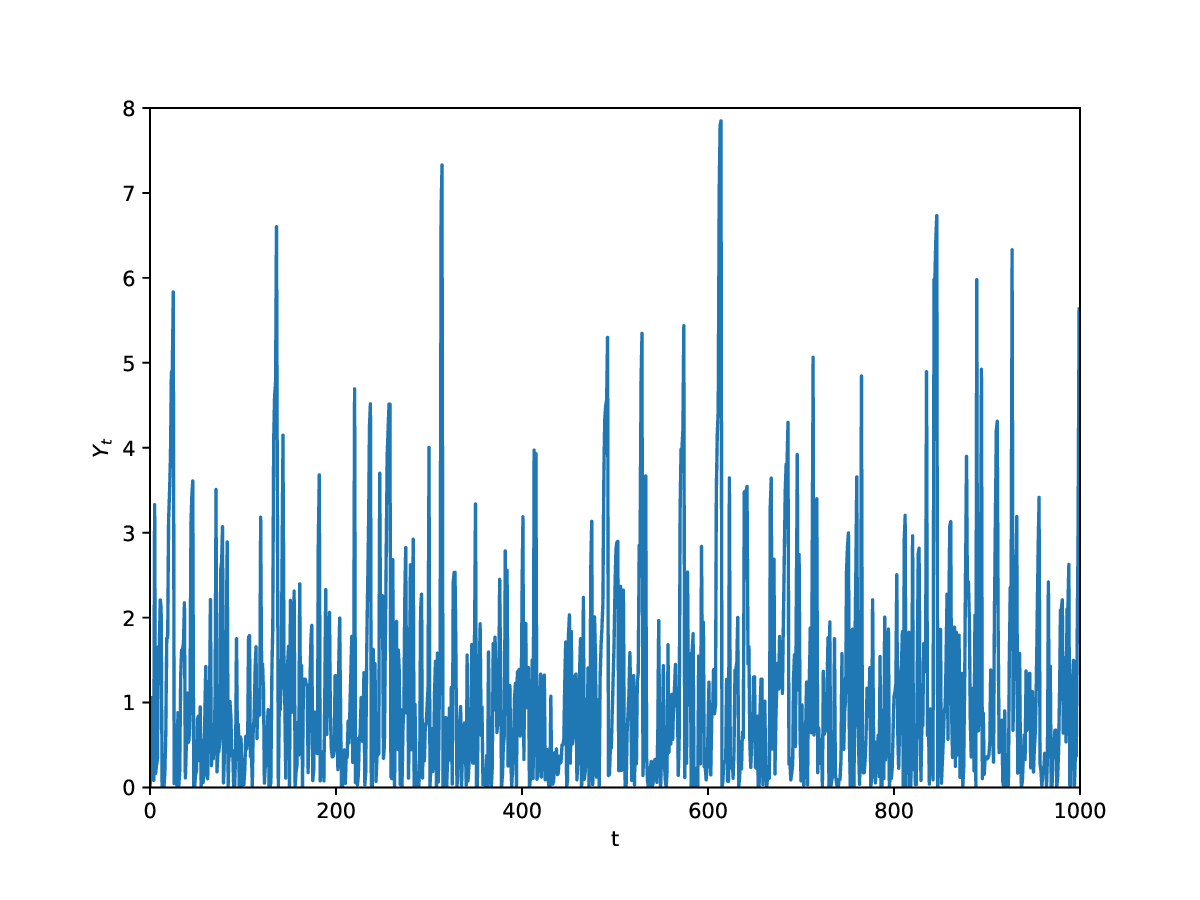}} \hfill
		\subfigure[Geometric inverse Gaussian]{\includegraphics[width=0.33\textwidth]{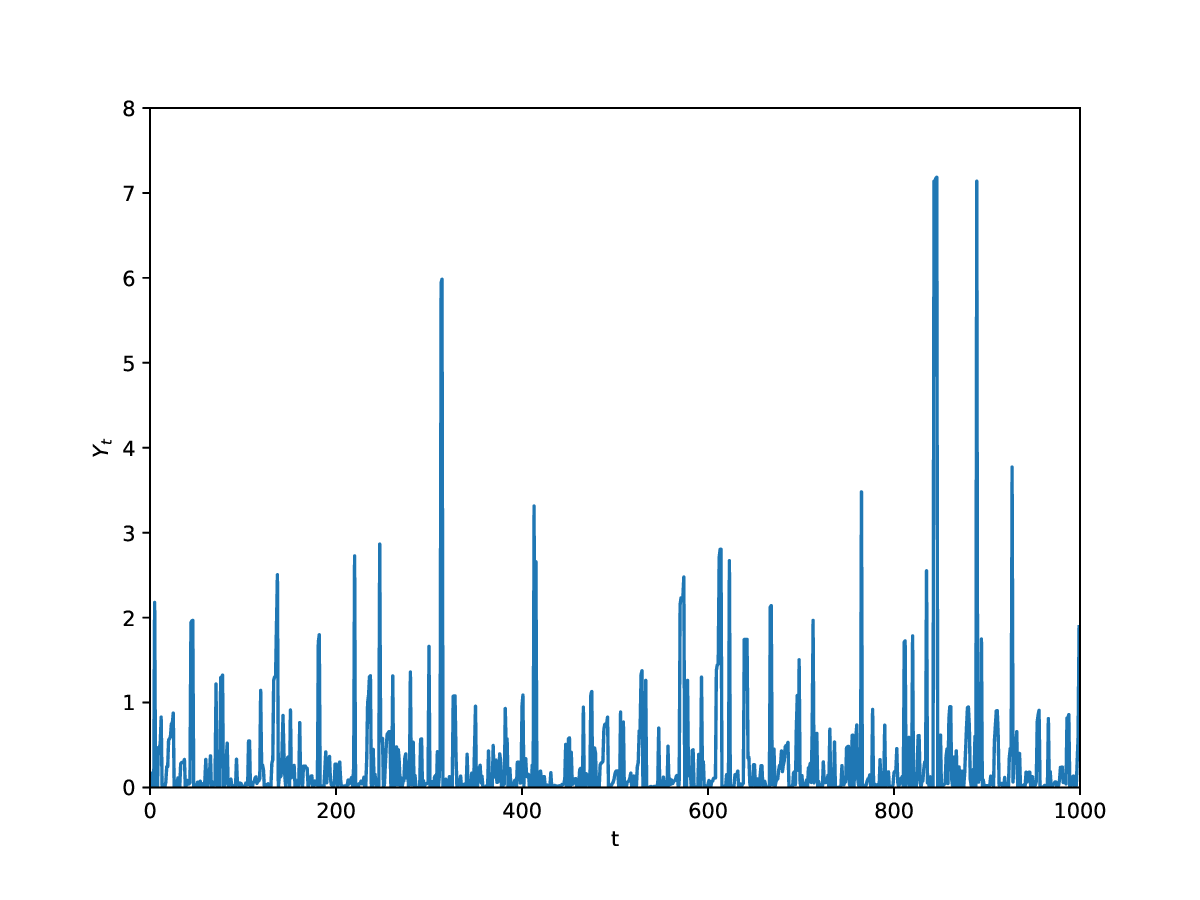}}
		\caption{The plot of time series $\{Y_t\}$ of length $1000$ from AR($1$) model with $\theta =0.3$ for (a) geometric tempered stable with parameters $\beta=0.6$ and $\lambda=1$ (b) geometric gamma with parameters $\beta=5$ and $\alpha=10$ (c) geometric inverse Gaussian with parameters $\gamma=1$ and $\delta=0.5$. }
		\label{fig3}
	\end{figure}
	In the next results, we discuss the form of the pdf of innovation terms using Laplace transform and complex inversion formula. For theory of complex inversion formula one can refer Chap. $4$ of \cite{Schiff1999}. 
	\begin{theorem}
		Consider a stationary AR$(1)$ process $\{Y_{n}\}$ as defined in Eq. \eqref{GBmodel}. If $\{Y_{n}\}$ is marginally distributed as geometric tempered stable with Laplace transform $\phi_{Y}(s)=\frac{1}{1+(s+\lambda)^\beta - \lambda^\beta}$, for $\beta=\frac{1}{m}, m=2,3,\cdots$, then the innovation terms $\{\epsilon_n\}$ also follow geometric tempered stable distribution with Laplace transform $\phi_{\epsilon}(s)=\frac{1}{1+\theta((s+\lambda)^\beta - \lambda^\beta)}$. Moreover, the pdf of innovation terms has the following integral form:
		\begin{equation}
			f_\epsilon(x)=\frac{e^{(s_0-\lambda)x}}{\theta \beta s_0^{\beta-1}}+\frac{1}{\pi} \int_{0}^{\infty} \frac{\theta e^{-x(y+\lambda)}y^{\beta}\sin(\pi\beta)}{1+2\theta(y^{\beta}\cos(\pi\beta)-\lambda^\beta)+\theta^2(y^{2\beta}-2\lambda^\beta y^\beta \cos(\pi\beta)+\lambda^{2\beta})} \,dy.\label{main_eq1}
		\end{equation}
		where $s_0=(\lambda^{\beta}-1/\theta)^{\beta-1}, \beta\in(0,1), \beta=\frac{1}{m}, m=2,3,\cdots \text{ and } \lambda>0.$
	\end{theorem}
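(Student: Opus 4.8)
The plan is to apply the complex inversion (Bromwich) formula to $\phi_\epsilon$ and then evaluate the resulting contour integral by residues together with a keyhole deformation around the branch cut of $(s+\lambda)^\beta$. First, applying the preceding theorem with $g(s)=(s+\lambda)^\beta-\lambda^\beta$, relation \eqref{main_eq2} gives $\phi_\epsilon(s)=\bigl(1+\theta((s+\lambda)^\beta-\lambda^\beta)\bigr)^{-1}$, which is again of geometric tempered stable type; this is the first assertion. For the density, write $f_\epsilon(x)=\frac{1}{2\pi i}\int_{c-i\infty}^{c+i\infty}e^{sx}\phi_\epsilon(s)\,ds$ for any $c>0$, using the principal branch of $z\mapsto z^\beta$, so that $\phi_\epsilon$ is analytic on $\mathbb{C}\setminus(-\infty,-\lambda]$ apart from the zeros of its denominator.

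Next I would locate the singularities. The point $s=-\lambda$ is a branch point, with cut taken along $(-\infty,-\lambda]$; a zero of the denominator requires $(s+\lambda)^\beta=\lambda^\beta-1/\theta$, which on the principal branch has the unique real solution $s_\ast=s_0-\lambda$ with $s_0=(\lambda^\beta-1/\theta)^{1/\beta}$ (so $s_\ast+\lambda=s_0$), provided $\theta\lambda^\beta>1$ so the right side is positive; since $0<s_0<\lambda$ this pole lies in $(-\lambda,0)$, to the left of the Bromwich line (when $\theta\lambda^\beta\le 1$ there is no such pole and only the integral term below survives). The hypothesis $\beta=1/m$ enters precisely here: it makes $(s+\lambda)^\beta$ an algebraic, $m$-sheeted function — equivalently, the substitution $z=(s+\lambda)^{1/m}$ turns $\phi_\epsilon$ into a rational function of $z$ with a single relevant pole — which is what guarantees there are no further singularities and makes the contour estimates below rigorous.

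Then I would deform the Bromwich line onto a large circle of radius $R$ indented by a keyhole along $(-\infty,-\lambda]$ and a small circle of radius $r$ about $-\lambda$: as $R\to\infty$ the circular arcs contribute nothing by a Jordan-type estimate (using $\phi_\epsilon(s)=O(|s|^{-\beta})$ and $x>0$), and as $r\to 0$ the small circle contributes nothing since $\phi_\epsilon$ stays bounded near $-\lambda$. By the residue theorem $f_\epsilon(x)$ then equals $\mathrm{Res}_{s=s_\ast}\bigl[e^{sx}\phi_\epsilon(s)\bigr]$ plus the integral over the two lips of the cut. The residue is $e^{s_\ast x}$ divided by $\frac{d}{ds}\bigl[\theta((s+\lambda)^\beta-\lambda^\beta)\bigr]=\theta\beta(s+\lambda)^{\beta-1}$ evaluated at $s_\ast$, i.e. $\frac{e^{(s_0-\lambda)x}}{\theta\beta\,s_0^{\beta-1}}$, the first term of \eqref{main_eq1}. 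For the lip integrals, parametrise the cut by $s=-(y+\lambda)$, $y>0$, where $(s+\lambda)^\beta$ equals $y^\beta e^{i\pi\beta}$ above and $y^\beta e^{-i\pi\beta}$ below; adding the two lips with their induced orientations and using $\frac1A-\frac1{\bar A}=\frac{-2i\,\mathrm{Im}\,A}{|A|^2}$ for $A=1+\theta(y^\beta e^{i\pi\beta}-\lambda^\beta)$, so that $\mathrm{Im}\,A=\theta y^\beta\sin(\pi\beta)$ and $|A|^2=1+2\theta(y^\beta\cos\pi\beta-\lambda^\beta)+\theta^2(y^{2\beta}-2\lambda^\beta y^\beta\cos\pi\beta+\lambda^{2\beta})$, the constants collapse ($\frac{1}{2\pi i}\cdot 2i=\frac1\pi$) and one recovers exactly the integral term of \eqref{main_eq1}, completing the proof.

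I expect the main obstacle to be the rigorous justification of the contour deformation in the third step: controlling the large circular arcs in the presence of only $|s|^{-\beta}$ decay and of the indentation around the branch point, and confirming that the principal-branch form of $\phi_\epsilon$ has no singularities beyond the single pole $s_\ast$. This is exactly the point at which the restriction $\beta=1/m$ — which makes $(s+\lambda)^\beta$ an algebraic function — is needed.
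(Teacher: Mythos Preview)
Your proposal is correct and follows the same strategy as the paper: complex inversion via a keyhole contour, a residue at the single real pole, and evaluation of the jump across the branch cut, with the same identification of $|A|^2$ and $\mathrm{Im}\,A$ producing the integrand of \eqref{main_eq1}. The only cosmetic difference is that the paper first shifts, working with the auxiliary function $G(s)=\frac{1}{1+\theta(s^\beta-\lambda^\beta)}$ (branch point at the origin, pole at $s_0$) and then recovers $f_\epsilon$ via the translation rule $\mathcal{L}^{-1}\{G(s+\lambda)\}=e^{-\lambda x}\,\mathcal{L}^{-1}\{G\}$, whereas you carry out the keyhole deformation directly for $\phi_\epsilon$ around the branch point $-\lambda$; the two computations are related by the change of variable $s\mapsto s+\lambda$ and yield identical results.
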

	
	\begin{proof} We substitute $g(s)=\frac{1}{1+(s+\lambda)^\beta - \lambda^\beta}$ in Eq. \eqref{main_eq2} to obtain the Laplace transform of innovation terms $\phi_{\epsilon}(s)=\frac{1}{1+\theta((s+\lambda)^\beta - \lambda^\beta)}, \,\beta=\frac{1}{m},\, m=2,3,4,\hdots$. 
		
		\noindent Consider the function $G(s)= \frac{1}{1+\theta(s^\beta-\lambda^\beta)}.$ For the function $G(s)$, with  $\beta=\frac{1}{m},\, m=2,3,\hdots$, it follows that $s_0 = (\lambda^\beta-1/\theta)^{1/\beta}$ is a simple pole and $s_1=(0,0)$ is the branch point. We use complex inversion formula to compute the Laplace inverse of $G(s)$ which in turn gives the pdf $f_\epsilon(x)$ of innovation terms
		$$f_{\epsilon}(x) = \frac{1}{2\pi\iota}\int_{x_0-i\infty}^{x_0+i\infty}e^{sx}\phi_\epsilon(s)\,ds,$$ where $x_0>a$ is chosen such that the integrand is analytic for $Re(s)>a.$ 
		Consider the contour $ABCDEF$ in Fig. \ref{fig_1}  with branch point $s_1=(0,0)$, circular arcs $AB$ and $EF$ of radius $R$, arc $CD$ of radius $r$, line segments $BC$ and $DE$ parallel to $x-$axis and $AF$ is line segment from $x_0-iy$ to $x_0+iy$. For closed curve $C$ and poles $s_i$ inside $C$, Cauchy residue theorem states that, $$\frac{1}{2\pi\iota}\oint_C e^{sx}G(s)\,ds = \sum_{i=1}^{n}Res(e^{sx}G(s), s_i).$$ In limiting case, for contour $ABCDEF$, the integral on circular arcs $AB$ and $EF$ tend to $0$ as $R\to \infty.$ The integral over $CD$ also tends to $0$ as $r\to 0.$ We need to compute the following integral: 
		\begin{equation}
			\frac{1}{2\pi\iota}\int_{x_0-\iota \infty}^{x_0+\iota\infty} e^{sx}G(s)\,ds =Res(e^{sx}G(s), s_0) - \frac{1}{2\pi\iota}\int_{BC}e^{sx}G(s)\,ds - \frac{1}{2\pi\iota}\int_{DE}e^{sx}G(s)\,ds.\label{eq_2}
		\end{equation}
		
		
		\begin{figure}[h]
			\centering
			\includegraphics[width=0.7\textwidth]{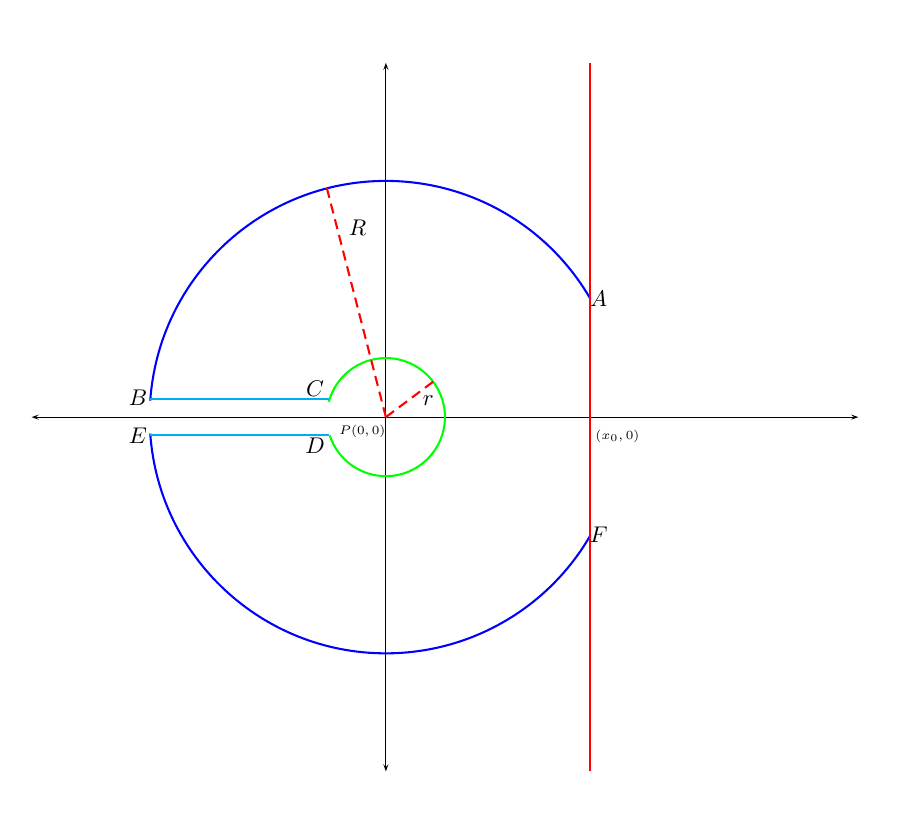}
			\caption{Contour plot with branch point at $P = (0,0)$ in anti-clockwise direction.}\label{fig_1}
		\end{figure}
		\noindent Now, along $BC$, let $s=ye^{\iota\pi}$, then $ds=-dy$ and,
		\begin{align}
			\frac{1}{2\pi\iota}\int_{-R}^{-r}\frac{e^{sx}}{1+\theta(s^\beta-\lambda^\beta)}\,ds &= \frac{1}{2\pi\iota}\int_{r}^{R}\frac{e^{-xy}}{1+\theta(y^{\beta}e^{\iota\pi\beta}-\lambda^{\beta})}\,dy \\
			&= \frac{1}{2\pi\iota}\int_{0}^{\infty}\frac{e^{-xy}}{1+\theta(y^{\beta}e^{\iota\pi\beta}-\lambda^{\beta})}\,dy.\label{eq_3}
		\end{align}
		Along $DE$, let $s=ye^{-\iota\pi}$, then $ds=-dy$ and,
		\begin{align}
			-\frac{1}{2\pi\iota}\int_{-r}^{-R}\frac{e^{sx}}{1+\theta(s^\beta-\lambda^\beta)}\,ds &= -\frac{1}{2\pi\iota}\int_{r}^{R}\frac{e^{-xy}}{1+\theta(y^{\beta}e^{-\iota\pi\beta}-\lambda^{\beta})}\,dy \\
			&= -\frac{1}{2\pi\iota}\int_{0}^{\infty}\frac{e^{-xy}}{1+\theta(y^{\beta}e^{-\iota\pi\beta}-\lambda^{\beta})}\,dy.\label{eq_4}
		\end{align}
		Now substitute Eq. \eqref{eq_3} and \eqref{eq_4} in Eq. \eqref{eq_2}.
		
		\begin{align*}
			\frac{1}{2\pi\iota}&\int_{x_0-\iota \infty}^{x_0+\iota\infty} e^{sx}G(s)\,ds =Res(e^{sx}G(s), s_0) - \frac{1}{2\pi\iota}\Big[\int_{0}^{\infty}\frac{e^{-xy}}{1+\theta(y^{\beta}e^{\iota\pi\beta}-\lambda^{\beta})}\,dy\\
			&-\frac{1}{2\pi\iota}\int_{0}^{\infty}\frac{e^{-xy}}{1+\theta(y^{\beta}e^{-\iota\pi\beta}-\lambda^{\beta})}\,dy\Big]\\
			&=Res(e^{sx}G(s), s_0) - \frac{1}{2\pi\iota}\int_{0}^{\infty}\frac{e^{-xy}(\theta y^\beta(e^{-\iota\pi\beta}-e^{\iota\pi\beta}))}{(1+\theta(y^{\beta}e^{\iota\pi\beta}-\lambda^{\beta}))(1+\theta(y^{\beta}e^{-\iota\pi\beta}-\lambda^{\beta}))}\,dy\\
			&=Res(e^{sx}G(s), s_0) + \frac{1}{\pi}\int_{0}^{\infty}\frac{e^{-xy}\theta y^\beta\sin(\pi\beta)}{1+2\theta(y^{\beta}\cos\pi\beta-\lambda^{\beta})+\theta^2(y^{2\beta}-2\lambda^{\beta}y^{\beta}\cos\pi\beta+\lambda^{2\beta})}\,dy
		\end{align*}
		Now we find the $Res(e^{sx}G(s))$ for poles at $s_0=(\lambda^\beta-1/\theta)^{m}, \, \beta =1/m,\; m=2,3,4,\cdots$. Also, note that all the poles $s_0<0$ are outside the analytic region, therefore we will calculate the residue at all those $s_0$ where $(\lambda^\beta-1/\theta)^m>0$ i.e. $\lambda>\frac{1}{\theta^m}$. We evaluate the residue as, $$\lim_{s \to s_0}\frac{(s-s_0)e^{sx}}{1+\theta(s^\beta-\lambda^\beta)}= \frac{e^{s_0x}}{\theta\beta(s_0)^{\beta-1}},$$ where $s_0=(\lambda^\beta-1/\theta)^{m}.$ The inverse Laplace of $G(s)$ is $$\mathcal{L}^{-1}\{G(s)\}=\frac{e^{s_0x}}{\theta\beta(s_0)^{\beta-1}}+\frac{1}{\pi}\int_{0}^{\infty}\frac{e^{-xy}\theta y^\beta\sin(\pi\beta)}{1+2\theta(y^{\beta}\cos\pi\beta-\lambda^{\beta})+\theta^2(y^{2\beta}-2\lambda^{\beta}y^{\beta}\cos\pi\beta+\lambda^{2\beta})}\,dy.$$
		We obtain the pdf by using the first translational property of inverse Laplace.
		\begin{align*}
			f_\epsilon(x) &= \mathcal{L}^{-1}\{G(s+\lambda)\}\, = e^{-\lambda x}\mathcal{L}^{-1}{G(s)}\\
			&= e^{-\lambda x}\Big[\frac{e^{s_0x}}{\theta\beta(s_0)^{\beta-1}}+\frac{1}{\pi}\int_{0}^{\infty}\frac{e^{-xy}\theta y^\beta\sin(\pi\beta)}{1+2\theta(y^{\beta}\cos\pi\beta-\lambda^{\beta})+\theta^2(y^{2\beta}-2\lambda^{\beta}y^{\beta}\cos\pi\beta+\lambda^{2\beta})}\,dy\Big].
		\end{align*}
		Hence, we obtain the form of pdf as mentioned in \eqref{main_eq1}.
	\end{proof}
	\begin{remark}
		For $\lambda=0, \phi_Y(s)=\frac{1}{1+s^{\beta}}$ and $\phi_{\epsilon}(s)=\frac{1}{1+\theta s^{\beta}}$, which is the Laplace transform of one side geometric stable distribution also known as Mittag-Leffler distribution. For $\theta=1$, poles will be $s_0=(-1)^m,\, m=2,3,4\cdots$. The residue for the pole corresponding to $s_0=1$ is $e^x.$ Then, the pdf becomes,
		\begin{align}
			f_{\epsilon}(x) = e^{x}+\frac{1}{\pi}\int_{0}^{\infty}\frac{e^{-xy}y^\beta\sin(\pi\beta)}{1+2 y^{\beta}\cos(\pi\beta) + y^{2\beta}}\,dy, \text{ where }s_0=1. \label{dist1}
		\end{align}
	\end{remark}
	
	\begin{remark}
		In \eqref{dist1}, we take $\beta=1/2$ or equivalently $m=2$ and obtain the density using the results from \cite{Abram}(pp. 303-304),
		\begin{align*}
			f_{\epsilon}(x) &= e^{x}+\frac{1}{\pi}\int_{0}^{\infty}\frac{e^{-xy}\sqrt{y}}{1+y}\,dy
			= e^{x} +\frac{1}{\sqrt{\pi x}}-e^x+e^x Erf(\sqrt{x})\\
			& =\frac{1}{\sqrt{\pi x}}+e^x Erf(\sqrt{x}).
		\end{align*}
		As $x\to 0,\, Erf(\sqrt{x}) \to 0 \text{ and }f_{\epsilon}(x) \sim \frac{1}{\sqrt{\pi x}},$ which match with  the asymptotic behaviour discussed in Theorem \ref{Theorem2.1}.
	\end{remark}
	
	\begin{proposition}
		For a stationary AR$(1)$ process with $\{Y_{n}\}$ defined as in Eq. \eqref{GBmodel} with marginals distributed as geometric gamma with Laplace transform $\phi_{Y}(s)=\frac{1}{1+\alpha\log(\frac{s+\beta}{\beta})}$, then the innovation terms $\{\epsilon_n\}$ also follow geometric gamma with Laplace transform $\phi_{\epsilon}(s)=\frac{1}{1+\alpha\theta\log(\frac{s+\beta}{\beta})}$. Moreover, the pdf of innovation terms has the following integral form: 
		$$f_{\epsilon}(x)=\frac{e^{s_0 x}(s_0+\beta)^{\alpha\theta}}{\beta^{\alpha\theta}} -e^{-\beta x} \int_{0}^{\infty}\frac{\alpha\theta e^{-xy}}{1+2\alpha\theta \log(y/\beta)+\pi^2\alpha^2\theta^2 +\alpha^2\theta^2(\log(y/\beta))^2}\,dy,$$ where $s_0=\beta(e^{-1/\alpha\theta}-1),\, \alpha>0 \text{ and } \beta>0.$
	\end{proposition}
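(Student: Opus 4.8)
The plan is to prove the two assertions in turn: the distributional statement for $\{\epsilon_n\}$ follows directly from the innovation identity established earlier, and the closed form for $f_{\epsilon}$ is obtained by a complex (Bromwich) inversion that runs exactly parallel to the geometric tempered stable theorem above.

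First I would substitute $g(s)=\alpha\log\big(\tfrac{s+\beta}{\beta}\big)$ into Eq.~\eqref{main_eq2} to get $\phi_{\epsilon}(s)=\dfrac{1}{1+\alpha\theta\log((s+\beta)/\beta)}$; this is again of geometric gamma type with $\alpha$ replaced by $\alpha\theta$, which is the first assertion. For the density I must invert this Laplace transform. Since $\phi_{\epsilon}$ carries a logarithmic branch point at $s=-\beta$, I would first shift it to the origin, writing $\phi_{\epsilon}(s)=G(s+\beta)$ with $G(s)=\dfrac{1}{1+\alpha\theta\log(s/\beta)}$, and use the first translation property $f_{\epsilon}(x)=e^{-\beta x}\mathcal{L}^{-1}\{G\}(x)$. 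The function $G$ has its branch point at $0$ and a single simple pole, at $s_0+\beta=\beta e^{-1/(\alpha\theta)}$, the unique root of $1+\alpha\theta\log(s/\beta)=0$ on the principal branch; hence $s_0=\beta(e^{-1/(\alpha\theta)}-1)$.

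Next I would apply the complex inversion formula to $G$ along the same keyhole contour $ABCDEF$ used in Fig.~\ref{fig_1}: the Bromwich vertical segment, two large arcs of radius $R$, a small arc of radius $r$ around the branch point at $0$, and the segments $BC$, $DE$ running just above and just below the negative real axis. Cauchy's residue theorem makes the closed-contour integral of $e^{sx}G(s)$ equal to $2\pi\iota\,Res(e^{sx}G(s),s_0+\beta)$. One then shows that the two large arcs vanish as $R\to\infty$ (on $|s|=R$ one has $|G(s)|\le 1/(1+\alpha\theta\log(R/\beta))\to0$, combined with a Jordan-type bound on $\int|e^{sx}|\,|ds|$ over the arc, which is finite because $x>0$), and that the small arc vanishes as $r\to0$ (there $\log(s/\beta)\to-\infty$, so $G(s)\to0$ while the arc length $2\pi r\to0$). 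On $BC$, putting $s=ye^{\iota\pi}$ gives $\log(s/\beta)=\log(y/\beta)+\iota\pi$, and on $DE$, putting $s=ye^{-\iota\pi}$ gives $\log(y/\beta)-\iota\pi$; in the limit the two resulting integrals have complex conjugate integrands, so adding them (with the overall factor $1/(2\pi\iota)$) collapses to the real kernel $\dfrac{\alpha\theta\,e^{-xy}}{1+2\alpha\theta\log(y/\beta)+\pi^{2}\alpha^{2}\theta^{2}+\alpha^{2}\theta^{2}(\log(y/\beta))^{2}}$, whose denominator is precisely $(1+\alpha\theta\log(y/\beta))^{2}+\pi^{2}\alpha^{2}\theta^{2}$. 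The residue at the simple pole equals $e^{(s_0+\beta)x}$ divided by the derivative of $1+\alpha\theta\log(s/\beta)$ at $s=s_0+\beta$; using $s_0+\beta=\beta e^{-1/(\alpha\theta)}$ to simplify this yields the closed-form prefactor $\dfrac{(s_0+\beta)^{\alpha\theta}}{\beta^{\alpha\theta}}\,e^{(s_0+\beta)x}$ recorded in the statement. Assembling $\mathcal{L}^{-1}\{G\}(x)$ from the residue and the branch-cut integral and then multiplying by $e^{-\beta x}$ produces $f_{\epsilon}(x)$ in the claimed form.

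The step I expect to be the main obstacle is the justification that the large-arc integrals genuinely vanish: unlike the tempered stable case, where $G(s)=O(s^{-\beta})$ decays polynomially, here $G$ decays only like $1/\log R$, so the bound is tight and must be paired carefully with the exponential damping of $e^{sx}$ (valid only for $x>0$) through a Jordan-lemma argument. A secondary but necessary point of care is to keep the principal branch of $\log$ fixed consistently on the two lips of the cut, so that the horizontal-segment contribution comes out real.
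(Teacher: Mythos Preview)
Your proposal is correct and follows essentially the same route as the paper's proof: substitute into Eq.~\eqref{main_eq2} to obtain $\phi_{\epsilon}$, identify the simple pole $s_0$ and the logarithmic branch point, and invert via the keyhole contour of Fig.~\ref{fig_1}, with the first translation property handling the shift by $\beta$. Your discussion of the large-arc vanishing (only $1/\log R$ decay, so a genuine Jordan-type argument is needed) and of the principal-branch bookkeeping on $BC$ and $DE$ is in fact more careful than the paper, which simply refers back to ``the same steps'' as in the tempered stable theorem.
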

	\begin{proof}
		Again we substitute $g(s)=\frac{1}{1+\alpha\log(\frac{s+\beta}{\beta})}$ in Eq. \eqref{main_eq2} and get the Laplace transform of innovation terms $\phi_{\epsilon}(s)=\frac{1}{1+\alpha\theta\log(\frac{s+\beta}{\beta})}$. Again we use the complex inversion formula to obtain the pdf of innovation terms. We consider the function $G(s)=\frac{1}{1+\theta\alpha\log(\frac{s+\beta}{\beta})}$ and the pole for $G(s)$ at $s_0=\beta(e^{-1/\alpha\theta}-1)$ and branch point at $s = -\beta$.  The residue corresponding to $s_0$ is $\frac{e^{s_0x}(s_0+\beta)^{\alpha\theta}}{\beta^{\theta\alpha}}.$ Using a similar contour as given in Fig. \ref{fig_1} and the same steps to obtain the pdf as $$f_{\epsilon}(x)=\frac{e^{s_0 x}(s_0+\beta)^{\alpha\theta}}{\beta^{\alpha\theta}} -e^{-\beta x} \int_{0}^{\infty}\frac{\alpha\theta e^{-xy}}{1+2\alpha\theta \log(y/\beta)+\pi^2\alpha^2\theta^2 +\alpha^2\theta^2(\log(y/\beta))^2}\,dy.$$
	\end{proof}
	
	\begin{proposition}
		For a stationary AR$(1)$ process with $\{Y_{n}\}$ defined as in Eq. \eqref{GBmodel} with marginals distributed as geometric inverse Gaussian with Laplace transform $\phi_{Y}(s)=\frac{1}{1+\gamma\delta\Bigl\{\sqrt{1+\frac{2s}{\gamma^2}}-1\Bigr\}}$, then the innovation terms $\{\epsilon_n\}$ also follows geometric inverse Gaussian with Laplace transform $\phi_{\epsilon}(s)=\frac{1}{1+\theta\gamma\delta\Bigl\{\sqrt{1+\frac{2s}{\gamma^2}}-1\Bigr\}}$. Moreover, the pdf of innovation terms has the following integral form: 
		$$f_{\epsilon}(x)=\frac{e^{s_0 x}\sqrt{2s_0+\gamma^2}}{\theta\delta} +\frac{\gamma^2e^{-x}}{2\pi}\int_{0}^{\infty}\frac{\theta\delta\sqrt{y}e^{-\frac{2xy}{\gamma^2}}}{1-2\theta\delta\gamma+\theta^2\delta^2(y+\gamma^2-2\gamma\sqrt{y}\cos{\theta})}\,dy,$$ where $s_0=\frac{\gamma^2}{2}(1-\frac{1}{\theta\gamma\delta})^2-\frac{\gamma^2}{2}, \,\delta>0 \text{ and } \gamma>0.$
	\end{proposition}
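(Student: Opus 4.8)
The plan is to dispatch the two assertions separately, with the density computation following the geometric tempered stable proof almost verbatim. The first claim is immediate from Eq.~\eqref{main_eq2}: writing the Bernstein function as $g(s)=\gamma\delta\{\sqrt{1+2s/\gamma^2}-1\}=\delta\{\sqrt{\gamma^2+2s}-\gamma\}$ and substituting into $\phi_\epsilon(s)=\tfrac{1}{1+\theta g(s)}$ gives $\phi_\epsilon(s)=\tfrac{1}{1+\theta\gamma\delta\{\sqrt{1+2s/\gamma^2}-1\}}$, which is the geometric inverse Gaussian transform with $\delta$ replaced by $\theta\delta$; hence $\{\epsilon_n\}$ is again geometric inverse Gaussian up to a scale change, just as in the two preceding results. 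For the density I would set $G(s)=\tfrac{1}{1+\theta\delta(\sqrt{2s+\gamma^2}-\gamma)}$ and locate its singularities: the branch point of $\sqrt{2s+\gamma^2}$ at $s=-\gamma^2/2$, and the simple pole $s_0$ solving $\sqrt{2s_0+\gamma^2}=\gamma-\tfrac1{\theta\delta}$, i.e. $s_0=\tfrac{\gamma^2}{2}(1-\tfrac1{\theta\gamma\delta})^2-\tfrac{\gamma^2}{2}$. Exactly as in the tempered stable proposition, this pole lies on the principal sheet (to the right of the cut) only when $\gamma>\tfrac1{\theta\delta}$, so the residue term in the stated formula is present under that condition and absent otherwise.

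Next I would invoke the complex inversion formula $f_\epsilon(x)=\tfrac{1}{2\pi\iota}\int_{x_0-\iota\infty}^{x_0+\iota\infty}e^{sx}\phi_\epsilon(s)\,ds$ and deform the Bromwich line onto the keyhole contour $ABCDEF$ of Fig.~\ref{fig_1}, now with branch cut along $(-\infty,-\gamma^2/2]$. The arc estimates carry over unchanged: the large arcs $AB$, $EF$ vanish as $R\to\infty$ thanks to the decay of $e^{sx}$ for $x>0$, and the small arc $CD$ vanishes as $r\to0$ because $G$ is bounded near the branch point ($G(s)\to\tfrac1{1-\theta\delta\gamma}$). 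Cauchy's residue theorem then expresses $f_\epsilon(x)$ as $Res(e^{sx}G(s),s_0)$ minus the line integrals along $BC$ and $DE$.

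On $BC$ and $DE$ I would parametrise $2s+\gamma^2=t\,e^{\pm\iota\pi}$, so that $\sqrt{2s+\gamma^2}=\pm\iota\sqrt{t}$ flips sign between the two edges while $e^{sx}=e^{-\gamma^2 x/2}e^{-tx/2}$ is common to both; subtracting the two integrals and rationalising by multiplying through by the conjugate of $1+\theta\delta(\iota\sqrt{t}-\gamma)$ collapses the difference to a single real integral over $(0,\infty)$ with denominator $|1+\theta\delta(\iota\sqrt{t}-\gamma)|^2=(1-\theta\delta\gamma)^2+\theta^2\delta^2 t$. The residue is computed from $\lim_{s\to s_0}\tfrac{(s-s_0)e^{sx}}{1+\theta\delta(\sqrt{2s+\gamma^2}-\gamma)}=\tfrac{e^{s_0x}\sqrt{2s_0+\gamma^2}}{\theta\delta}$, using $\tfrac{d}{ds}\theta\delta\sqrt{2s+\gamma^2}=\theta\delta/\sqrt{2s+\gamma^2}$. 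Adding the residue term to the branch-cut integral and rescaling the dummy variable then yields the displayed integral representation of $f_\epsilon$.

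The argument is conceptually identical to the earlier geometric tempered stable computation, so the hard part is only bookkeeping: one must fix the branch of $\sqrt{2s+\gamma^2}$ consistently so that $G$ is analytic and coincides with $\phi_\epsilon$ to the right of the cut, carefully combine the $BC$ and $DE$ contributions while tracking the constants through the change of variables into the exact form written in the statement, and record the side condition $\gamma>1/(\theta\delta)$ under which the residue term appears. No genuinely new idea beyond the earlier contour computation is required.
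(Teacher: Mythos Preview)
Your proposal is correct and follows essentially the same route as the paper: derive $\phi_\epsilon$ from Eq.~\eqref{main_eq2}, then repeat the keyhole contour inversion of the tempered stable theorem with branch point at $s=-\gamma^2/2$ and simple pole at $s_0$, computing the residue and the $BC$--$DE$ difference exactly as you outline. You are in fact more explicit than the paper's proof, which merely says ``we use the same steps''; your observation that the residue term only appears when $\gamma>1/(\theta\delta)$ parallels the $\lambda>1/\theta^m$ caveat in the tempered stable case and is worth keeping.
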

	\begin{proof}
		The Laplace transform of innovation terms $\phi_{\epsilon}(s)=\frac{1}{1+\theta\gamma\delta\Bigl\{\sqrt{1+\frac{2s}{\gamma^2}}-1\Bigr\}}$ is straight forward from Eq. \eqref{main_eq2}. Now we use the complex inversion formula to obtain the pdf of innovation terms as done in previous theorem. We consider the function $G(s)=\frac{1}{1+\theta\delta\gamma\Bigl\{\sqrt{1+\frac{2s}{\gamma^2}}-1\Bigr\}}$ and the pole for $G(s)$ is $s_0=\frac{\gamma^2}{2}\Bigl\{1-\frac{1}{\theta\delta\gamma}\Bigr\} - \frac{\gamma^2}{2}.$ The residue corresponding to $s_0$ is $\frac{e^{s_0x}\sqrt{2s_0+\gamma^2}}{\theta\delta}.$ We use the same steps to obtain the pdf as $$f_{\epsilon}(x)=\frac{e^{s_0 x}\sqrt{2s_0+\gamma^2}}{\theta\delta} +\frac{\gamma^2e^{-x}}{2\pi}\int_{0}^{\infty}\frac{\theta\delta\sqrt{y}e^{-\frac{2xy}{\gamma^2}}}{1-2\theta\delta\gamma+\theta^2\delta^2(y+\gamma^2-2\gamma\sqrt{y}\cos{\theta})}\,dy.$$
	\end{proof}
	
	\noindent We know that for a random process, the moments (if exists) uniquely describe the properties of the random variable $X$. We provide the $k^{th}$ order moments using the Laplace transform in the following result.
	The $k^{th}$ order moment of random variable $X$ is evaluated as:
	$\mathbb{E}(X^{k}) = (-1)^{k}\phi^{(k)}(s)$ for $s=0$ and $k\in \mathbb{N}$.
	
	\begin{enumerate}[(a)]
		\item Geometric tempered stable innovations: $\phi_\epsilon(s) = \dfrac{1}{1+\theta\{(s+\lambda)^\beta-\lambda^\beta\}}$. Then, \\
		$\mathbb{E}(\epsilon) = \theta\beta\lambda^{\beta-1},\quad
		\mathbb{E}(\epsilon^2) = \theta\beta(\beta-1)\lambda^{\beta-2}-2\theta^2\beta^2(\lambda)^{2\beta-2},\lambda >0.$
		
		\item Geometric gamma innovations: $\phi_{\epsilon}(s)=\dfrac{1}{1+\alpha\theta\log\left(\dfrac{s+\beta}{\beta}\right)}$. Then, \\
		$\mathbb{E}(\epsilon) = \dfrac{\theta\alpha}{\beta}, \quad
		\mathbb{E}(\epsilon^2) = \dfrac{\theta\alpha + 2\theta^{2}\alpha^2}{\beta^2}.$
		
		\item Geometric inverse Gaussian innovations: $\phi_\epsilon(s) = \dfrac{1}{1+\theta\gamma\delta\Bigl\{\sqrt{1+\frac{2s}{\gamma^2}}-1\Bigr\}}$. Then, \\
		$\mathbb{E}(\epsilon) = \dfrac{\theta\delta}{\gamma}, \quad
		\mathbb{E}(\epsilon^2) = \dfrac{2\theta^{2}\delta^{2}}{\gamma^2} + \dfrac{\theta\delta}{\gamma^3}.$
	\end{enumerate}

	
	\subsection{Generalisation to $k^{th}$ order autoregressive processes}
	We define the generalised form of the process developed in previous section by Eq. \eqref{GBmodel} as follows: 
	
	\begin{equation} 
		Y_{n} =
		\begin{cases}
			\epsilon_n, & \text{ with probability } \theta,\\
			Y_{n-1} + \epsilon_n, & \text{ with probability } 1-\theta_1,\\
			Y_{n-2} + \epsilon_n, & \text{ with probability } 1-\theta_2,\\
			\vdots\\
			Y_{n-k} + \epsilon_n, & \text{ with probability } 1-\theta_k,   
		\end{cases} \label{GGBmodel},
	\end{equation}
	where $\sum_{i=1}^{k}\theta_i = 1-\theta, \,$  $0\leq\theta \leq1,\, i=1,2, \hdots, k.$ Also, $\{\epsilon_n\}$ are independent of $\{Y_{n-1}, Y_{n-2}, \cdots, \}$. We write the Laplace transform $\phi_{Y_n}(s)$ for the model defined in Eq. \eqref{GGBmodel}. 
	\begin{align*}
		\phi_{Y_n}(s) &= \theta\phi_{\epsilon_n}(s) + \theta_1\phi_{Y_{n-1}}(s)\phi_{\epsilon_n}(s) + \cdots + \theta_k\phi_{Y_{n-k}}(s)\phi_{\epsilon_n}(s)\\
		&= \phi_{\epsilon_n}(s)\{\theta + \theta_1\phi_{Y_{n-1}(s)} + \cdots + \theta_k\phi_{Y_{n-k}(s)}\}.
	\end{align*}
	\noindent Since series is stationary, we get,
	\begin{align*}
		\phi_{Y}(s)&=\phi_{\epsilon}(s){\theta + \sum_{i=1}^{k}\theta_i\phi_{Y}(s)} =\phi_{\epsilon}(s)\{\theta + {(1-\theta)}\phi_{Y}(s)\};\\
		\phi_{\epsilon}(s)&=\frac{\phi_Y(s)}{\theta+(1-\theta)\phi_Y(s)}.
	\end{align*}
	Hence, we obtain the similar form for innovation terms $\{\epsilon_{n}\}$ as obtained in previous section.
	Next we define AR$(1)$ process with $|\theta|<1$ that is, stationary process and find the Laplace transform of the innovation terms.
	\begin{proposition}\label{ARmodel}
		Consider the AR$(1)$ process $Y_n=\theta Y_{n-1}+ \epsilon_n, \, |\theta|<1$ is strictly stationary with Laplace transform of marginals as $\phi_{Y}(s)=\frac{1}{1+g(s)}$ then the Laplace transform of innovation terms $\{\epsilon_n\}$ is $\phi_{\epsilon}(s)=\frac{1+g(\theta s)}{1+g(s)}$. 
	\end{proposition}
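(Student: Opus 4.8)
The plan is to mimic the Laplace-transform computation used for the mixture-type $AR(1)$ model of Eq.~\eqref{GBmodel}, the only difference being that here the coefficient $\theta$ acts multiplicatively inside the argument of the transform instead of through a Bernoulli indicator. Throughout I would use that $\{\epsilon_n\}$ is independent of $Y_{n-1}$ and that the process is strictly stationary.

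First I would take Laplace transforms of both sides of $Y_n=\theta Y_{n-1}+\epsilon_n$. Independence yields the factorisation
\begin{equation*}
  \phi_{Y_n}(s)=\mathbb{E}\!\left[e^{-s\theta Y_{n-1}}\right]\mathbb{E}\!\left[e^{-s\epsilon_n}\right]=\phi_{Y_{n-1}}(\theta s)\,\phi_{\epsilon_n}(s).
\end{equation*}
Strict stationarity then lets me replace $\phi_{Y_n}$ and $\phi_{Y_{n-1}}$ by the common marginal transform $\phi_Y$; since $\phi_Y(\theta s)=\frac{1}{1+g(\theta s)}>0$ I may divide to obtain $\phi_\epsilon(s)=\phi_Y(s)/\phi_Y(\theta s)$. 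Substituting $\phi_Y(s)=\frac{1}{1+g(s)}$ from Definition~\ref{def2} gives
\begin{equation*}
  \phi_\epsilon(s)=\frac{1/(1+g(s))}{1/(1+g(\theta s))}=\frac{1+g(\theta s)}{1+g(s)},
\end{equation*}
which is the asserted formula; note $\phi_\epsilon(0)=1$ because $g(0)=0$ for the Laplace exponent of a proper subordinator, so the normalisation is consistent.

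The derivation is essentially a one-liner, so the point I would treat with care is not the algebra but \emph{consistency}: for $s\mapsto\frac{1+g(\theta s)}{1+g(s)}$ to be the Laplace transform of an honest nonnegative innovation it must be completely monotone, which is exactly the requirement that the gid marginal be self-decomposable with respect to the contraction factor $\theta$. Because the proposition is stated conditionally --- it presupposes a strictly stationary $AR(1)$ with the given marginal --- this holds automatically and nothing further is needed for the proof as phrased. Still, I would append a short remark that in the concrete cases one should verify complete monotonicity of the ratio directly (for the geometric tempered stable choice $g(s)=(s+\lambda)^\beta-\lambda^\beta$ the ratio tends to $\theta^\beta\in(0,1)$ as $s\to\infty$, so the innovation carries an atom at $0$ of that mass), whereas for some choices of $g$ no such stationary innovation exists. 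In short, I expect no genuine obstacle inside the argument; the only care needed is to keep separate the implication ``the transform must be this'' (immediate from stationarity) from the existence question ``such an innovation law exists'' (a self-decomposability statement lying outside the claim).
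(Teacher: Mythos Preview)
Your argument is correct and is essentially identical to the paper's own proof, which also just takes Laplace transforms, factors via independence, invokes stationarity to write $\phi_\epsilon(s)=\phi_Y(s)/\phi_Y(\theta s)$, and substitutes $\phi_Y(s)=\frac{1}{1+g(s)}$. Your added remarks on complete monotonicity and the existence/self-decomposability issue go beyond what the paper records, but they do not alter the approach.
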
 
	\begin{proof}
		We have stationary AR$(1)$ process with $|\theta|<1$ defined as 
		\begin{align*}
			Y_n &= \theta Y_{n-1} +\epsilon_n, \\
			\phi_{Y_n}(s) &= \phi_{Y_{n-1}}(\theta s)\phi_{\epsilon}(s)\\ 
			\phi_{\epsilon}(s) &= \frac{\phi_{Y}(s)}{\phi_{Y}(\theta s)}\,
			=\frac{1+g(\theta s)}{1+g(s)}.
		\end{align*}
	\end{proof}
	\begin{enumerate}[(a)]
		\item Geometric tempered stable:
		For $g(s) = (s+\lambda)^{\beta}-\lambda^{\beta},$ then $ \phi_\epsilon(s) = \frac{1+(\theta s+\lambda)^\beta-\lambda^\beta}{1 +(s+\lambda)^{\beta}-\lambda^{\beta}}, \, \lambda>0, \text{ and }\beta\in(0,1).$ 
		
		\item Geometric gamma:
		For $g(s) = \alpha\log(1+\frac{s}{\beta}),$ then $\phi_\epsilon(s) = \frac{1+\alpha\log\Bigl(\frac{\theta s+\beta}{\beta}\Bigr)}{1+\alpha\log\Bigl(\frac{s+\beta}{\beta}\Bigr)}, \, \alpha>0 \text{ and } \beta>0.$
		
		\item Geometric inverse Gaussian:
		For $g(s) = \delta\gamma\Bigl\{\sqrt{1+\frac{2s}{\gamma^2}}-1\Bigr\}$, then $\phi_\epsilon(s) = \frac{1+\delta\gamma\Bigl\{\sqrt{1+\frac{2\theta s}{\gamma^2}}-1\Bigr\}}{1+\delta\gamma\Bigl\{\sqrt{1+\frac{2s}{\gamma^2}}-1\Bigr\}}, \, \delta>0 \text{ and } \gamma>0.$
	\end{enumerate}
	
	\section{Parameter estimation and simulation}\label{section4}
	In this section, we estimate the parameters of the model defined in \ref{ARmodel} using conditional least square (CLS) and method of moments (MOM). We first apply the CLS method to estimate the parameter $\theta$ and then use the MOM for the parameters of marginals in next subsection. 
	
	\subsection{Estimation by conditional least square and method of moments}
	\noindent The conditional likelihood function is given by,
	$$L(\theta, \lambda,\beta)=\sum_{t=1}^{n}(Y_t-\mathbb{E}(Y_t|Y_{t-1}))^2,\text{ where } \mathbb{E}[Y_t|Y_{t-1}] = \theta Y_{t-1} + \mathbb{E}(\epsilon_t).$$
	\begin{enumerate}[(a)]
		\item Geometric tempered stable: For the model defined in \ref{ARmodel}, first we assume that innovations are from distribution with Laplace transform $\phi_\epsilon(s) = \frac{1+(\theta s+\lambda)^\beta-\lambda^\beta}{1 +(s+\lambda)^{\beta}-\lambda^{\beta}}, \, \lambda>0 \text{ and }\beta\in(0,1).$ The first order and second order theoretical moments of innovation terms \{$\epsilon_t\}$ are approximated by the empirical moments, which are given by
		\begin{align} \label{MOM1}
			\hat{m}_1 &= \sum_{t=1}^{n}\dfrac{\epsilon_t}{n}=(1-\theta)\beta\lambda^{\beta-1}, \\
			\hat{m}_2 &= \sum_{t=1}^{n}\dfrac{\epsilon_t^2}{n} = \beta(\beta-1)(\theta^2-1)\lambda^{\beta-2} - 2\beta^2\lambda^{2\beta-2}(\theta-1).
		\end{align}
		
		We substitute $\hat{m}_1$ from \eqref{MOM1} to likelihood function $L(\theta, \lambda, \beta)$ and obtain, 
		$$
		L = \sum_{t=1}^{n}(Y_t-\theta Y_{t-1} - (1-\theta)\beta\lambda^{\beta-1})^2.
		$$
		Take derivative with respect to unknown parameters $\theta, \lambda$ and $\beta$ and obtain the following relation,
		\begin{align}
			\dfrac{\partial{L}}{\partial{\lambda}} &= \sum_{t=1}^{n}2(Y_t-\theta Y_{t-1} - (1-\theta)\beta\lambda^{\beta-1})((1-\theta)\beta\lambda^{\beta-1}),\\
			\dfrac{\partial{L}}{\partial{\beta}} &=\sum_{t=1}^{n}2(Y_t-\theta Y_{t-1} - (1-\theta)\beta\lambda^{\beta-1})((1-\theta)\lambda^{\beta-1}+(1-\theta)\beta\lambda^{\beta-1}\log{(\beta-1)}),\\
			\dfrac{\partial{L}}{\partial{\theta}} &= \sum_{t=1}^{n}2(Y_t-\theta Y_{t-1} - (1-\theta)\beta\lambda^{\beta-1})(\beta\lambda^{\beta-1}- Y_{t-1}).
		\end{align}
		Solving the above equations, we get the estimate for $\theta$,
		$$\hat{\theta} = \dfrac{\sum_{t=1}^{n}Y_tY_{t-1}-n\bar{Y}_{t-1}^2}{\sum_{t=1}^{n}(Y_{t-1}-\bar{Y}_{t-1})^2},$$
		where $\bar{Y}_{t-1} = \dfrac{\sum_{t=1}^{n}Y_{t-1}}{n}. $
		\noindent Now we use first and second order moments to estimate the remaining parameters $\lambda$ and $\beta$.
		\begin{align*}
			\hat{m}_1 &= (1-\hat{\theta})\beta\lambda^{\beta-1}, \\
			\hat{m}_2 &=  \beta(\beta-1)(\hat{\theta}^2-1)\lambda^{\beta-2} - 2\beta^2\lambda^{2\beta-2}(\hat{\theta}-1).
		\end{align*}
		After rearranging the terms we get the following non-linear relation between $\lambda$ and $\beta$, $$\beta = 1 - \lambda\dfrac{\hat{m}_2(1-\hat{\theta})-2\hat{m}_1^2}{\hat{m}_1(1-\hat{\theta}^2)}.$$ To solve it further we use \textit{fsolve()} function available in python scipy package.
		Also, note that the estimate for $\theta$ using CLS will be same for all the cases.
		\item Geometric gamma: For this case the Laplace transform of innovation terms are \\$\phi_\epsilon(s) = \dfrac{1+(\log(\theta s+\beta/\beta))^\alpha}{1 +(\log(s+\beta/\beta))^{\alpha}}, \, \alpha>0 \text{ and }\beta>0.$ The first and second order moments of innovation terms will be, 
		\begin{align*} \label{MOM2}
			\hat{m}_1 &= (1-\hat{\theta})\dfrac{\alpha}{\beta}, \\
			\hat{m}_2 &= \dfrac{\alpha(1-\hat{\theta}^2)}{\beta^2} + \dfrac{2\alpha^2(1-\hat{\theta})}{\beta^2}.
		\end{align*}
		Using these moments we get the the estimates as $\hat{\beta} = \dfrac{\hat{m}_1(1-\hat{\theta}^2)}{\hat{m}_2(1-\hat{\theta}-2\hat{m}_1^2)}$ and $\hat{\alpha} = \dfrac{\hat{m}_1\hat{\beta}}{1-\hat{\theta}}.$
		
		\item Geometric inverse Gaussian: The Laplace transform of innovation terms are \\$\phi_\epsilon(s) = \dfrac{1+\delta(\sqrt{2\theta s+\gamma^2}-\gamma)}{1+\delta(\sqrt{2s+\gamma^2}-\gamma)}, \, \gamma>0\text{ and }\delta>0.$ The first and second order moments of innovation terms will be, 
		\begin{align*}
			\hat{m}_1 &= (1-\hat{\theta})\dfrac{\delta}{\gamma}, \\
			\hat{m}_2 &= 2(1-\hat{\theta})\dfrac{\delta^2}{\gamma^2} + (1-\hat{\theta}^2)\dfrac{\delta}{\gamma^3}.
		\end{align*}
		Using these moments we get the the estimates as $\hat{\gamma} = \sqrt{\dfrac{\hat{m}_1(1-\hat{\theta}^2)}{\hat{m}_2(1-\hat{\theta})-2\hat{m}_1^2}}$ and $\hat{\delta} = \dfrac{\hat{m}_1\hat{\gamma}}{1-\hat{\theta}}.$
	\end{enumerate}
	
	\subsection{Simulation}
	We use simulation to further assess the performance of estimation method. We simulate the $500$ trajectories each of length $1000$ for geometric tempered and geometric gamma case. We use the method of Laplace transform to simulate the innovation terms $\{\epsilon_n\}$ as described in \cite{Ridout2009} and then generate the time series $\{Y_n\}$ from AR($1$) model. The true model parameter is $\theta = 0.3$ and distribution parameters for geometric tempered stable, geometric gamma and geometric inverse Gaussian are tabulated in \ref{tab1}. The boxplots for two cases are shown in Fig. \ref{fig5}. 
	\begin{table}[H]
		\caption{Estimation of parameters using CLS and MOM for geometric tempered stable and geometric gamma case.} \label{tab1}
		\centering
		\begin{tabular}{ P{2.0cm}P{2cm}P{2cm}P{2cm}}
			\hline
			&&&
			\multirow{2}{4em}{}\\
			& parameter $1$ & parameter $2$ & parameter $3$ \\               
			\hline
			&&&
			\multirow{2}{4em}{}\\
			True values & $\beta = 0.6 $ & $\lambda = 1 $ & 
			$\rho = 0.3$\\    
			Est. values &  $\hat{\beta} = 0.206$ & $\hat{\lambda} = 0.917$ & $\hat{\rho} = 0.297$\\
			&&&
			\multirow{2}{4em}{}\\
			True values &$\beta = 1 $ & $\alpha = 2 $ & $\rho = 0.3$ \\    
			Est. values & $\hat{\beta} = 1.189$ & $\hat{\alpha} = 2.422$ & $\hat{\rho} = 0.298$ \\
			&&&
			\multirow{2}{4em}{}\\
			True values &$\gamma = 1 $ & $\delta = 2 $ & $\rho = 0.3$ \\    
			Est. values & $\hat{\gamma} = 1.028$ & $\hat{\delta} = 2.054$ & $\hat{\rho} = 0.296$ \\
			\hline
		\end{tabular}
	\end{table}
	\noindent From the data in table and boxplots we observe that the CLS method gives good estimate for parameter $\rho$, whereas the estimation of other parameters by MOM have variance. The estimated value of $\beta$ from geometric tempered stable is not good. The relation of $\beta$ and $\lambda$ is non-linear therefore we solved it by using \textit{fsolve} function defined in python's scipy library which is based on numerical methods. 
	\begin{figure}[H]
		\centering
		\subfigure[Geometric tempered stable]{\includegraphics[width=0.33\textwidth]{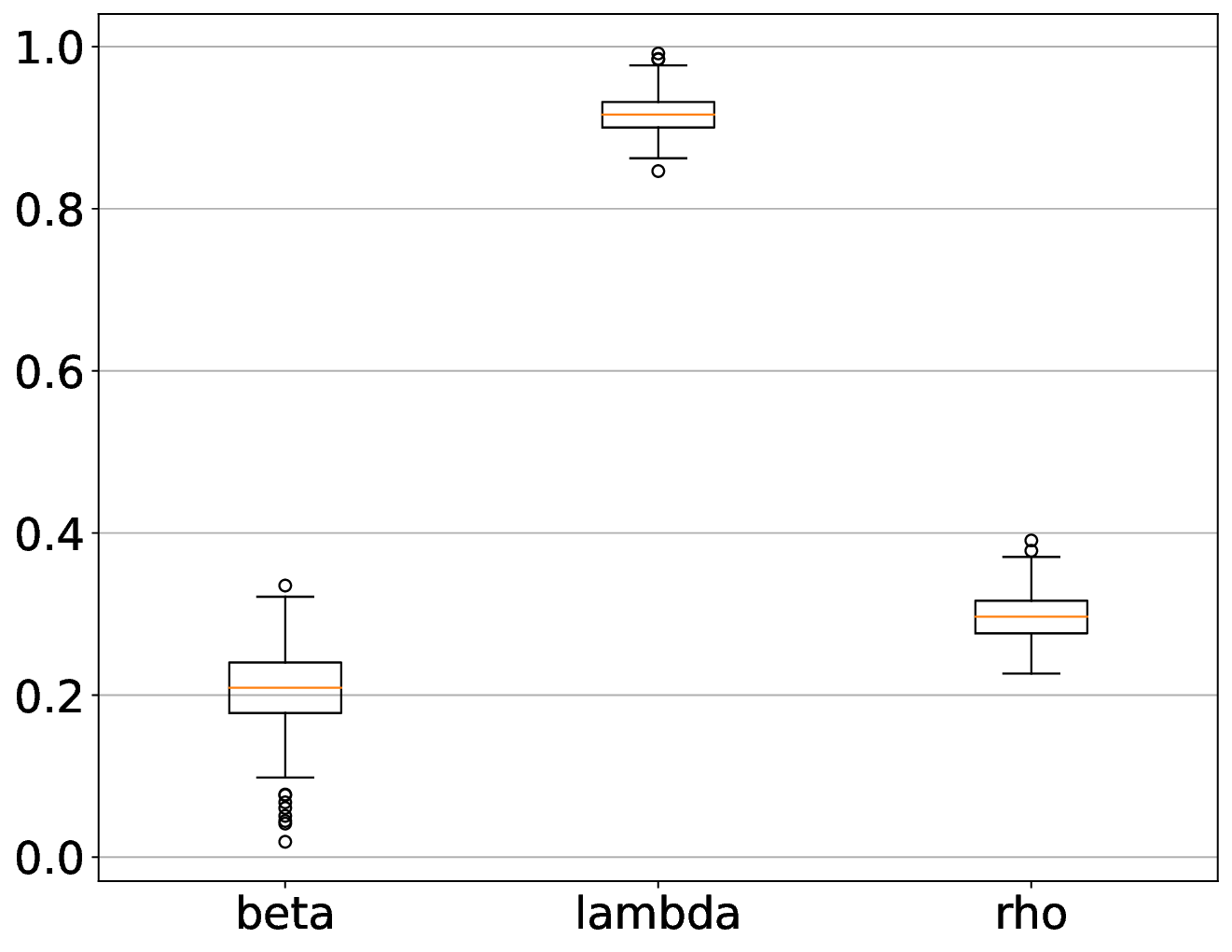}}\hfill
		\subfigure[Geometric gamma]{\includegraphics[width=0.33\textwidth]{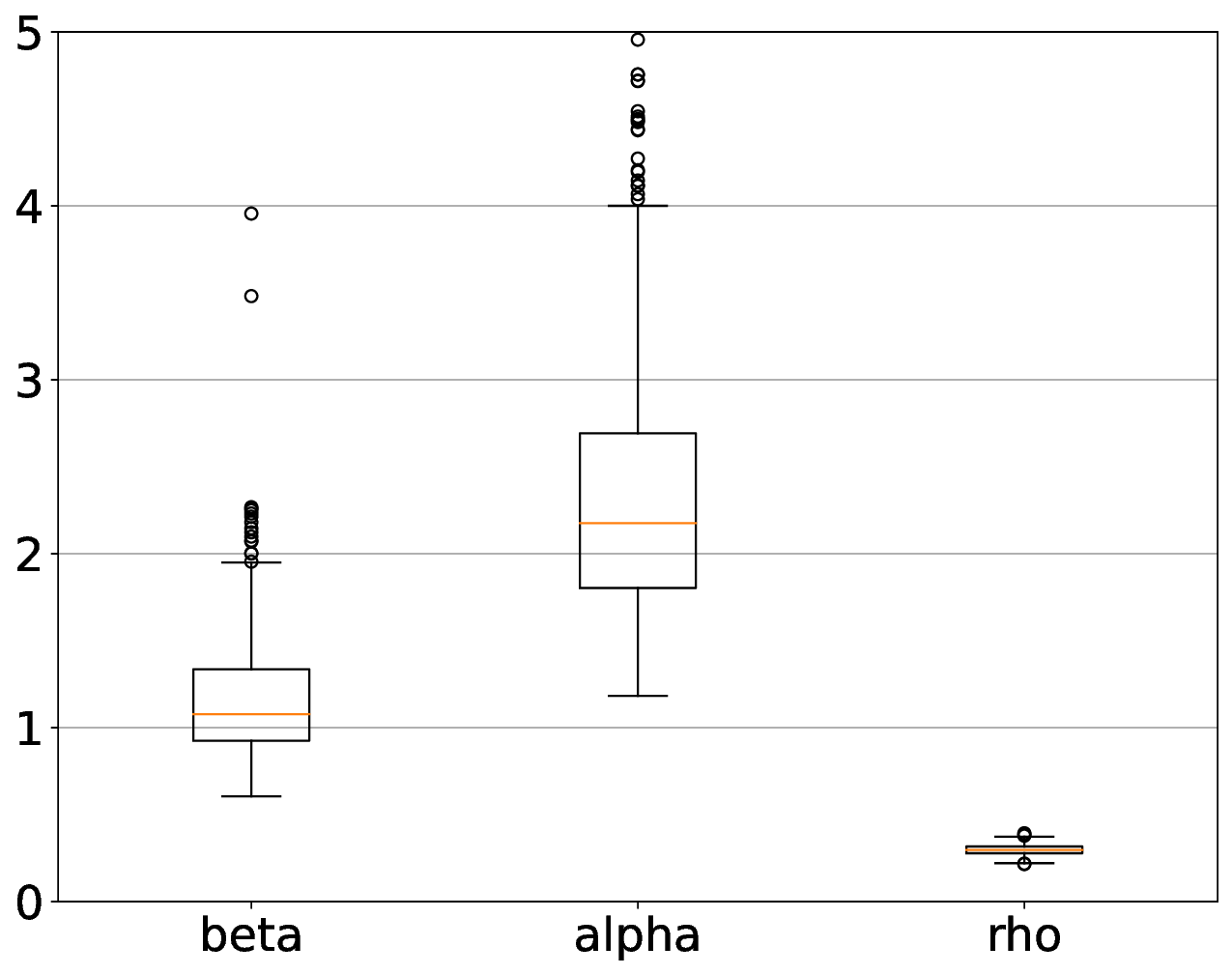}}
		\subfigure[Geometric inverse Gaussian]{\includegraphics[width=0.33\textwidth]{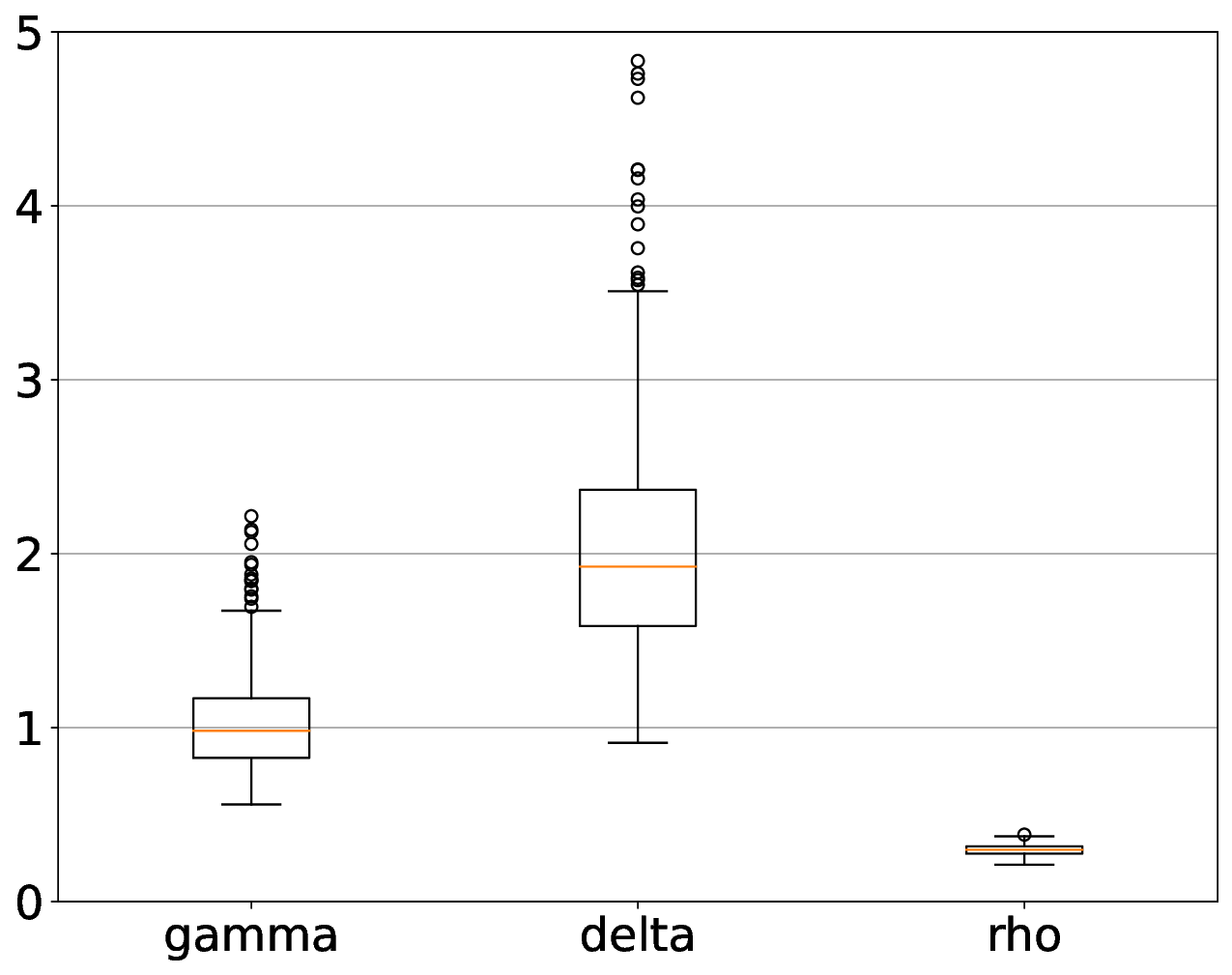}}
		\caption{The boxplots for model parameter estimates from AR($1$) model with true value of $\theta =0.3$ for (a) geometric tempered stable with true parameter values of $\beta=0.6$ and $\lambda=1$ (b) geometric gamma with true parameter values of $\beta=1$ and $\alpha=2$ (c) geometric inverse Gaussian with parameters $\gamma=1$ and $\delta=2.$
		}\label{fig5}
	\end{figure}
	
	\section{Conclusion}\label{conclusion}
	In this paper we use the Bernstein function $g(s)$ which is the Laplace exponent of a positive infinitely divisible random variable to define gid random variables with Laplace transform of the form $\dfrac{1}{1+g(s)}$. We also find the Laplace transform of mixtures of some particular gid random variables which is a new class of marginals to study. A new autoregressive process of order 1 with gid distribution is considered. We deduce that if marginals of AR$(1)$ defined in Eq. \eqref{GBmodel} are gid then the innovation terms are also gid with scale change of $\theta$. We find the integral form of the pdf of innovation terms using the Laplace transform and complex inversion method for three cases namely, geometric tempered stable, geometric gamma and geometric inverse Gaussian subordinators. Further, moments play an important role to study the characteristics of pdf. We have calculated the first and second order moments for these three gid random varaiables. Next we generalised the AR process to $k^{th}$ order and also proposed AR($1$) model defined in \ref{ARmodel} with marginals having Laplace transform of the form $\dfrac{1}{1+g(s)}$. At last, we have estimated the parameters of the model defined in \ref{ARmodel} using CLS and MOM and simulation study implies that the estimates are satisfactory. \\
	
	\noindent{\bf Acknowledgements:} Monika S. Dhull would like to thank the Ministry of Education (MoE), India for supporting her PhD research. Further, Arun Kumar would like to express his gratitude to Science and Engineering Research Board (SERB), India for financial support under the MATRICS research grant MTR/2019/000286. \\
	
	\noindent {\bf Data Availability:} No real world data is used for research described in this article.
	\bibliography{bibfile}
	
\end{document}